\definecolor{darkblue}{rgb}{0,0,0.7} 
\newcommand\maps{{\colon}}
\newcommand{\define}[1]{{\bf \boldmath #1}}
\newcommand\rel{{\mathbf{Rel}}}
\newcommand\Span{{\mathbf{Span}}}
\newcommand\corel{{\mathbf{Corel}}}
\newcommand\cospan{{\mathbf{Cospan}}}
\newcommand\finset{{\mathbf{FinSet}}}
\newcommand\Set{{\mathbf{Set}}}
\newcommand\im{{\mathrm{im}}}
\tikzset{font=\footnotesize}
\tikzstyle{none}=[inner sep=0pt]
\tikzstyle{circ}=[circle,fill=black,draw,inner sep=3pt]
\newcommand{\mult}[1]
{
\begin{aligned}
    \resizebox{#1}{!}{
\begin{tikzpicture}
	\begin{pgfonlayer}{nodelayer}
		\node [style=none] (0) at (1, -0) {};
		\node [style=circ] (1) at (0.125, -0) {};
		\node [style=none] (2) at (-1, 0.5) {};
		\node [style=none] (3) at (-1, -0.5) {};
	\end{pgfonlayer}
	\begin{pgfonlayer}{edgelayer}
		\draw[line width=2pt] (0.center) to (1.center);
		\draw[line width=2pt] [in=0, out=120, looseness=1.20] (1.center) to (2.center);
		\draw[line width=2pt] [in=0, out=-120, looseness=1.20] (1.center) to (3.center);
	\end{pgfonlayer}
      \end{tikzpicture}}
\end{aligned}
}
\newcommand{\unit}[1]
{
  \begin{aligned}
    \resizebox{#1}{!}{
\begin{tikzpicture}
	\begin{pgfonlayer}{nodelayer}
		\node [style=none] (0) at (1, -0) {};
		\node [style=none] (1) at (-1, -0) {};
		\node [style=circ] (2) at (0, -0) {};
	\end{pgfonlayer}
	\begin{pgfonlayer}{edgelayer}
		\draw[line width=2pt] (0.center) to (2);
	\end{pgfonlayer}
      \end{tikzpicture}}
  \end{aligned}
}
\newcommand{\comult}[1]
{
\begin{aligned}
    \resizebox{#1}{!}{
\begin{tikzpicture}
	\begin{pgfonlayer}{nodelayer}
		\node [style=none] (0) at (-1, -0) {};
		\node [style=circ] (1) at (-0.125, -0) {};
		\node [style=none] (2) at (1, 0.5) {};
		\node [style=none] (3) at (1, -0.5) {};
	\end{pgfonlayer}
	\begin{pgfonlayer}{edgelayer}
		\draw[line width=2pt] (0.center) to (1.center);
		\draw[line width=2pt] [in=180, out=60, looseness=1.20] (1.center) to (2.center);
		\draw[line width=2pt] [in=180, out=-60, looseness=1.20] (1.center) to (3.center);
	\end{pgfonlayer}
      \end{tikzpicture}}
\end{aligned}
}
\newcommand{\counit}[1]
{
  \begin{aligned}
    \resizebox{#1}{!}{
\begin{tikzpicture}
	\begin{pgfonlayer}{nodelayer}
		\node [style=none] (0) at (-1, -0) {};
		\node [style=none] (1) at (1, -0) {};
		\node [style=circ] (2) at (0, -0) {};
	\end{pgfonlayer}
	\begin{pgfonlayer}{edgelayer}
		\draw[line width=2pt] (0.center) to (2);
	\end{pgfonlayer}
      \end{tikzpicture}}
  \end{aligned}
}
\newcommand{\idone}[1]
{
  \begin{aligned}
    \resizebox{#1}{!}{
     \begin{tikzpicture}
	\begin{pgfonlayer}{nodelayer}
		\node [style=none] (0) at (-1, -0) {};
		\node [style=none] (1) at (1, -0) {};
		\node [style=none] (2) at (0, 0.5) {};
		\node [style=none] (3) at (0, -0.5) {};
	\end{pgfonlayer}
	\begin{pgfonlayer}{edgelayer}
		\draw[line width=2pt] (1.center) to (0.center);
	\end{pgfonlayer}
\end{tikzpicture} 
    }
  \end{aligned}
}
\newcommand{\swap}[1]
{
  \begin{aligned}
    \resizebox{#1}{!}{
\begin{tikzpicture}
	\begin{pgfonlayer}{nodelayer}
		\node [style=none] (2) at (-0.5, -0.5) {};
		\node [style=none] (3) at (-2, 0.5) {};
		\node [style=none] (4) at (-0.5, 0.5) {};
		\node [style=none] (5) at (-2, -0.5) {};
	\end{pgfonlayer}
	\begin{pgfonlayer}{edgelayer}
		\draw[line width=2pt] [in=180, out=0, looseness=1.00] (3.center) to (2.center);
		\draw[line width=2pt] [in=0, out=180, looseness=1.00] (4.center) to (5.center);
	\end{pgfonlayer}
\end{tikzpicture}
    }
  \end{aligned}
}
\newcommand{\assocl}[1]
{
  \begin{aligned}
    \resizebox{#1}{!}{
\begin{tikzpicture}
	\begin{pgfonlayer}{nodelayer}
		\node [style=circ] (0) at (0.125, -0) {};
		\node [style=none] (1) at (-1, 0.5) {};
		\node [style=none] (2) at (-1, -0.5) {};
		\node [style=none] (3) at (0, -1) {};
		\node [style=none] (4) at (2.25, -0.5) {};
		\node [style=none] (5) at (0.25, -0) {};
		\node [style=circ] (6) at (1.25, -0.5) {};
		\node [style=none] (7) at (-1, -1) {};
	\end{pgfonlayer}
	\begin{pgfonlayer}{edgelayer}
		\draw[line width=2pt] [in=0, out=120, looseness=1.20] (0.center) to (1.center);
		\draw[line width=2pt] [in=0, out=-120, looseness=1.20] (0.center) to (2.center);
		\draw[line width=2pt] (4.center) to (6);
		\draw[line width=2pt] [in=0, out=120, looseness=1.20] (6) to (5.center);
		\draw[line width=2pt] [in=0, out=-120, looseness=1.20] (6) to (3.center);
		\draw[line width=2pt] (3.center) to (7.center);
	\end{pgfonlayer}
      \end{tikzpicture}}
  \end{aligned}
}
\newcommand{\assocr}[1]
{
  \begin{aligned}
    \resizebox{#1}{!}{
\begin{tikzpicture}
	\begin{pgfonlayer}{nodelayer}
		\node [style=circ] (0) at (0.125, -0.5) {};
		\node [style=none] (1) at (-1, -1) {};
		\node [style=none] (2) at (-1, 0) {};
		\node [style=none] (3) at (0, 0.5) {};
		\node [style=none] (4) at (2.25, 0) {};
		\node [style=none] (5) at (0.25, -0.5) {};
		\node [style=circ] (6) at (1.25, 0) {};
		\node [style=none] (7) at (-1, 0.5) {};
	\end{pgfonlayer}
	\begin{pgfonlayer}{edgelayer}
		\draw[line width=2pt] [in=0, out=-120, looseness=1.20] (0.center) to (1.center);
		\draw[line width=2pt] [in=0, out=120, looseness=1.20] (0.center) to (2.center);
		\draw[line width=2pt] (4.center) to (6);
		\draw[line width=2pt] [in=0, out=-120, looseness=1.20] (6) to (5.center);
		\draw[line width=2pt] [in=0, out=120, looseness=1.20] (6) to (3.center);
		\draw[line width=2pt] (3.center) to (7.center);
	\end{pgfonlayer}
      \end{tikzpicture}}
  \end{aligned}
}
\newcommand{\coassocl}[1]
{
  \begin{aligned}
    \resizebox{#1}{!}{
\begin{tikzpicture}
	\begin{pgfonlayer}{nodelayer}
		\node [style=circ] (0) at (1.125, -0.5) {};
		\node [style=none] (1) at (2.25, -1) {};
		\node [style=none] (2) at (2.25, 0) {};
		\node [style=none] (3) at (1.25, 0.5) {};
		\node [style=none] (4) at (-1, 0) {};
		\node [style=none] (5) at (1, -0.5) {};
		\node [style=circ] (6) at (0, 0) {};
		\node [style=none] (7) at (2.25, 0.5) {};
	\end{pgfonlayer}
	\begin{pgfonlayer}{edgelayer}
		\draw[line width=2pt] [in=180, out=-60, looseness=1.20] (0.center) to (1.center);
		\draw[line width=2pt] [in=180, out=60, looseness=1.20] (0.center) to (2.center);
		\draw[line width=2pt] (4.center) to (6);
		\draw[line width=2pt] [in=180, out=-60, looseness=1.20] (6) to (5.center);
		\draw[line width=2pt] [in=180, out=60, looseness=1.20] (6) to (3.center);
		\draw[line width=2pt] (3.center) to (7.center);
	\end{pgfonlayer}
      \end{tikzpicture}}
  \end{aligned}
}
\newcommand{\coassocr}[1]
{
  \begin{aligned}
    \resizebox{#1}{!}{
\begin{tikzpicture}
	\begin{pgfonlayer}{nodelayer}
		\node [style=circ] (0) at (1.125, 0) {};
		\node [style=none] (1) at (2.25, 0.5) {};
		\node [style=none] (2) at (2.25, -0.5) {};
		\node [style=none] (3) at (1.25, -1) {};
		\node [style=none] (4) at (-1, -0.5) {};
		\node [style=none] (5) at (1, 0) {};
		\node [style=circ] (6) at (0, -0.5) {};
		\node [style=none] (7) at (2.25, -1) {};
	\end{pgfonlayer}
	\begin{pgfonlayer}{edgelayer}
		\draw[line width=2pt] [in=180, out=60, looseness=1.20] (0.center) to (1.center);
		\draw[line width=2pt] [in=180, out=-60, looseness=1.20] (0.center) to (2.center);
		\draw[line width=2pt] (4.center) to (6);
		\draw[line width=2pt] [in=180, out=60, looseness=1.20] (6) to (5.center);
		\draw[line width=2pt] [in=180, out=-60, looseness=1.20] (6) to (3.center);
		\draw[line width=2pt] (3.center) to (7.center);
	\end{pgfonlayer}
      \end{tikzpicture}}
  \end{aligned}
}
\newcommand{\unitl}[1]
{
  \begin{aligned}
    \resizebox{#1}{!}{
\begin{tikzpicture}
	\begin{pgfonlayer}{nodelayer}
		\node [style=none] (0) at (1, -0) {};
		\node [style=circ] (1) at (0.125, -0) {};
		\node [style=circ] (2) at (-1, 0.5) {};
		\node [style=none] (3) at (-1, -0.5) {};
		\node [style=none] (4) at (-2, -0.5) {};
	\end{pgfonlayer}
	\begin{pgfonlayer}{edgelayer}
		\draw[line width=2pt] (0.center) to (1.center);
		\draw[line width=2pt] [in=0, out=120, looseness=1.20] (1.center) to (2.center);
		\draw[line width=2pt] [in=0, out=-120, looseness=1.20] (1.center) to (3.center);
		\draw[line width=2pt] (4.center) to (3.center);
	\end{pgfonlayer}
\end{tikzpicture}
    }
  \end{aligned}
}
\newcommand{\counitl}[1]
{
  \begin{aligned}
    \resizebox{#1}{!}{
\begin{tikzpicture}
	\begin{pgfonlayer}{nodelayer}
		\node [style=none] (0) at (-2, -0) {};
		\node [style=circ] (1) at (-1.125, -0) {};
		\node [style=circ] (2) at (0, 0.5) {};
		\node [style=none] (3) at (0, -0.5) {};
		\node [style=none] (4) at (1, -0.5) {};
	\end{pgfonlayer}
	\begin{pgfonlayer}{edgelayer}
		\draw[line width=2pt] (0.center) to (1.center);
		\draw[line width=2pt] [in=180, out=60, looseness=1.20] (1.center) to (2.center);
		\draw[line width=2pt] [in=180, out=-60, looseness=1.20] (1.center) to (3.center);
		\draw[line width=2pt] (4.center) to (3.center);
	\end{pgfonlayer}
\end{tikzpicture}
    }
  \end{aligned}
}
\newcommand{\commute}[1]
{
  \begin{aligned}
    \resizebox{#1}{!}{
\begin{tikzpicture}
	\begin{pgfonlayer}{nodelayer}
		\node [style=none] (0) at (1.25, -0) {};
		\node [style=circ] (1) at (0.375, -0) {};
		\node [style=none] (2) at (-0.5, -0.5) {};
		\node [style=none] (3) at (-2, 0.5) {};
		\node [style=none] (4) at (-0.5, 0.5) {};
		\node [style=none] (5) at (-2, -0.5) {};
	\end{pgfonlayer}
	\begin{pgfonlayer}{edgelayer}
		\draw[line width=2pt] (0.center) to (1.center);
		\draw[line width=2pt] [in=0, out=-120, looseness=1.20] (1.center) to (2.center);
		\draw[line width=2pt] [in=180, out=0, looseness=1.00] (3.center) to (2.center);
		\draw[line width=2pt] [in=0, out=120, looseness=1.20] (1.center) to (4.center);
		\draw[line width=2pt] [in=0, out=180, looseness=1.00] (4.center) to (5.center);
	\end{pgfonlayer}
\end{tikzpicture}
    }
  \end{aligned}
}
\newcommand{\cocommute}[1]
{
  \begin{aligned}
    \resizebox{#1}{!}{
\begin{tikzpicture}
	\begin{pgfonlayer}{nodelayer}
		\node [style=none] (0) at (-2, -0) {};
		\node [style=circ] (1) at (-1.125, -0) {};
		\node [style=none] (2) at (-0.25, -0.5) {};
		\node [style=none] (3) at (1.25, 0.5) {};
		\node [style=none] (4) at (-0.25, 0.5) {};
		\node [style=none] (5) at (1.25, -0.5) {};
	\end{pgfonlayer}
	\begin{pgfonlayer}{edgelayer}
		\draw[line width=2pt] (0.center) to (1.center);
		\draw[line width=2pt] [in=180, out=-60, looseness=1.20] (1.center) to (2.center);
		\draw[line width=2pt] [in=0, out=180, looseness=1.00] (3.center) to (2.center);
		\draw[line width=2pt] [in=180, out=60, looseness=1.20] (1.center) to (4.center);
		\draw[line width=2pt] [in=180, out=0, looseness=1.00] (4.center) to (5.center);
	\end{pgfonlayer}
\end{tikzpicture}
    }
  \end{aligned}
}
\newcommand{\frobs}[1]
{
  \begin{aligned}
    \resizebox{#1}{!}{
\begin{tikzpicture}
	\begin{pgfonlayer}{nodelayer}
		\node [style=none] (0) at (-1.5, 0.5) {};
		\node [style=circ] (1) at (-0.75, 0.5) {};
		\node [style=none] (2) at (0.25, -0) {};
		\node [style=none] (3) at (0.25, 1) {};
		\node [style=circ] (4) at (1, -0.5) {};
		\node [style=none] (5) at (0, -0) {};
		\node [style=none] (6) at (1.75, -0.5) {};
		\node [style=none] (7) at (0, -1) {};
		\node [style=none] (8) at (1.75, 1) {};
		\node [style=none] (9) at (-1.5, -1) {};
	\end{pgfonlayer}
	\begin{pgfonlayer}{edgelayer}
		\draw[line width=2pt] [in=180, out=-60, looseness=1.20] (1) to (2.center);
		\draw[line width=2pt] [in=180, out=60, looseness=1.20] (1) to (3.center);
		\draw[line width=2pt] (0.center) to (1);
		\draw[line width=2pt] (6.center) to (4);
		\draw[line width=2pt] [in=0, out=120, looseness=1.20] (4) to (5.center);
		\draw[line width=2pt] [in=0, out=-120, looseness=1.20] (4) to (7.center);
		\draw[line width=2pt] (3.center) to (8.center);
		\draw[line width=2pt] (7.center) to (9.center);
	\end{pgfonlayer}
\end{tikzpicture}
    }
  \end{aligned}
}
\newcommand{\frobx}[1]
{
  \begin{aligned}
    \resizebox{#1}{!}{
\begin{tikzpicture}
	\begin{pgfonlayer}{nodelayer}
		\node [style=circ] (0) at (-0.5, -0) {};
		\node [style=none] (1) at (-1.5, -0.5) {};
		\node [style=none] (2) at (-1.5, 0.5) {};
		\node [style=circ] (3) at (0.5, -0) {};
		\node [style=none] (4) at (1.5, -0.5) {};
		\node [style=none] (5) at (1.5, 0.5) {};
	\end{pgfonlayer}
	\begin{pgfonlayer}{edgelayer}
		\draw[line width=2pt] [in=0, out=-120, looseness=1.20] (0.center) to (1.center);
		\draw[line width=2pt] [in=0, out=120, looseness=1.20] (0.center) to (2.center);
		\draw[line width=2pt] [in=180, out=-60, looseness=1.20] (3) to (4.center);
		\draw[line width=2pt] [in=180, out=60, looseness=1.20] (3) to (5.center);
		\draw[line width=2pt] (0) to (3);
	\end{pgfonlayer}
\end{tikzpicture}
    }
  \end{aligned}
}
\newcommand{\frobz}[1]
{
  \begin{aligned}
    \resizebox{#1}{!}{
\begin{tikzpicture}
	\begin{pgfonlayer}{nodelayer}
		\node [style=none] (0) at (1.75, 0.5) {};
		\node [style=circ] (1) at (1, 0.5) {};
		\node [style=none] (2) at (0, -0) {};
		\node [style=none] (3) at (0, 1) {};
		\node [style=circ] (4) at (-0.75, -0.5) {};
		\node [style=none] (5) at (0.25, -0) {};
		\node [style=none] (6) at (-1.5, -0.5) {};
		\node [style=none] (7) at (0.25, -1) {};
		\node [style=none] (8) at (-1.5, 1) {};
		\node [style=none] (9) at (1.75, -1) {};
	\end{pgfonlayer}
	\begin{pgfonlayer}{edgelayer}
		\draw[line width=2pt] [in=0, out=-120, looseness=1.20] (1) to (2.center);
		\draw[line width=2pt] [in=0, out=120, looseness=1.20] (1) to (3.center);
		\draw[line width=2pt] (0.center) to (1);
		\draw[line width=2pt] (6.center) to (4);
		\draw[line width=2pt] [in=180, out=60, looseness=1.20] (4) to (5.center);
		\draw[line width=2pt] [in=180, out=-60, looseness=1.20] (4) to (7.center);
		\draw[line width=2pt] (3.center) to (8.center);
		\draw[line width=2pt] (7.center) to (9.center);
	\end{pgfonlayer}
\end{tikzpicture}
    }
  \end{aligned}
}
\newcommand{\bi}[1]
{
  \begin{aligned}
    \resizebox{#1}{!}{
\begin{tikzpicture}
	\begin{pgfonlayer}{nodelayer}
		\node [style=none] (0) at (-2, 0.75) {};
		\node [style=none] (1) at (-2, -0.5) {};
		\node [style=circ] (2) at (-1, 0.75) {};
		\node [style=circ] (3) at (-1, -0.5) {};
		\node [style=none] (4) at (0, 1.25) {};
		\node [style=none] (5) at (-0.25, 0.25) {};
		\node [style=none] (6) at (-0.25, -0) {};
		\node [style=none] (7) at (0, -1) {};
		\node [style=none] (8) at (0, 1.25) {};
		\node [style=none] (9) at (0.25, 0.25) {};
		\node [style=none] (10) at (0.25, -0) {};
		\node [style=none] (11) at (0, -1) {};
		\node [style=circ] (12) at (1, 0.75) {};
		\node [style=circ] (13) at (1, -0.5) {};
		\node [style=none] (14) at (2, 0.75) {};
		\node [style=none] (15) at (2, -0.5) {};
	\end{pgfonlayer}
	\begin{pgfonlayer}{edgelayer}
		\draw[line width=2pt] (0.center) to (2);
		\draw[line width=2pt] (1.center) to (3);
		\draw[line width=2pt] (12) to (14.center);
		\draw[line width=2pt] (13) to (15.center);
		\draw[line width=2pt] [in=180, out=60, looseness=1.20] (2) to (4.center);
		\draw[line width=2pt] [in=180, out=-60, looseness=1.20] (2) to (5.center);
		\draw[line width=2pt] [in=180, out=60, looseness=1.20] (3) to (6.center);
		\draw[line width=2pt] [in=180, out=-60, looseness=1.20] (3) to (7.center);
		\draw[line width=2pt] [in=120, out=0, looseness=1.20] (8.center) to (12);
		\draw[line width=2pt] [in=-120, out=0, looseness=1.20] (9.center) to (12);
		\draw[line width=2pt] [in=120, out=0, looseness=1.20] (10.center) to (13);
		\draw[line width=2pt] [in=-120, out=0, looseness=1.20] (11.center) to (13);
		\draw[line width=2pt] (4.center) to (8.center);
		\draw[line width=2pt] [in=150, out=-30, looseness=1.00] (5.center) to (10.center);
		\draw[line width=2pt] [in=-150, out=30, looseness=1.00] (6.center) to (9.center);
		\draw[line width=2pt] (7.center) to (11.center);
	\end{pgfonlayer}
\end{tikzpicture}
    }
  \end{aligned}
}
\newcommand{\bimultl}[1]
{
\begin{aligned}
    \resizebox{#1}{!}{
\begin{tikzpicture}
	\begin{pgfonlayer}{nodelayer}
		\node [style=circ] (0) at (1, -0) {};
		\node [style=circ] (1) at (0.125, -0) {};
		\node [style=none] (2) at (-1, 0.5) {};
		\node [style=none] (3) at (-1, -0.5) {};
	\end{pgfonlayer}
	\begin{pgfonlayer}{edgelayer}
		\draw[line width=2pt] (0.center) to (1.center);
		\draw[line width=2pt] [in=0, out=120, looseness=1.20] (1.center) to (2.center);
		\draw[line width=2pt] [in=0, out=-120, looseness=1.20] (1.center) to (3.center);
	\end{pgfonlayer}
      \end{tikzpicture}}
\end{aligned}
}
\newcommand{\bimultr}[1]
{
\begin{aligned}
    \resizebox{#1}{!}{
\begin{tikzpicture}
	\begin{pgfonlayer}{nodelayer}
		\node [style=none] (e) at (1, -0) {};
		\node [style=circ] (0) at (0.125, 0.5) {};
		\node [style=circ] (1) at (0.125, -0.5) {};
		\node [style=none] (2) at (-1, 0.5) {};
		\node [style=none] (3) at (-1, -0.5) {};
	\end{pgfonlayer}
	\begin{pgfonlayer}{edgelayer}
		\draw[line width=2pt] (2.center) to (0.center);
		\draw[line width=2pt] (3.center) to (1.center);
	\end{pgfonlayer}
      \end{tikzpicture}}
\end{aligned}
}
\newcommand{\bicomultl}[1]
{
\begin{aligned}
    \resizebox{#1}{!}{
\begin{tikzpicture}
	\begin{pgfonlayer}{nodelayer}
		\node [style=circ] (0) at (-1, -0) {};
		\node [style=circ] (1) at (-0.125, -0) {};
		\node [style=none] (2) at (1, 0.5) {};
		\node [style=none] (3) at (1, -0.5) {};
	\end{pgfonlayer}
	\begin{pgfonlayer}{edgelayer}
		\draw[line width=2pt] (0.center) to (1.center);
		\draw[line width=2pt] [in=180, out=60, looseness=1.20] (1.center) to (2.center);
		\draw[line width=2pt] [in=180, out=-60, looseness=1.20] (1.center) to (3.center);
	\end{pgfonlayer}
      \end{tikzpicture}}
\end{aligned}
}
\newcommand{\bicomultr}[1]
{
\begin{aligned}
    \resizebox{#1}{!}{
\begin{tikzpicture}
	\begin{pgfonlayer}{nodelayer}
		\node [style=none] (e) at (-1, -0) {};
		\node [style=circ] (0) at (-0.125, 0.5) {};
		\node [style=circ] (1) at (-0.125, -0.5) {};
		\node [style=none] (2) at (1, 0.5) {};
		\node [style=none] (3) at (1, -0.5) {};
	\end{pgfonlayer}
	\begin{pgfonlayer}{edgelayer}
		\draw[line width=2pt] (0.center) to (2.center);
		\draw[line width=2pt] (1.center) to (3.center);
	\end{pgfonlayer}
      \end{tikzpicture}}
\end{aligned}
}
\newcommand{\spec}[1]
{
  \begin{aligned}
    \resizebox{#1}{!}{
\begin{tikzpicture}
	\begin{pgfonlayer}{nodelayer}
		\node [style=none] (0) at (1.75, -0) {};
		\node [style=circ] (1) at (0.75, -0) {};
		\node [style=none] (2) at (0, -0.5) {};
		\node [style=none] (3) at (0, 0.5) {};
		\node [style=circ] (4) at (-0.75, -0) {};
		\node [style=none] (5) at (0, -0.5) {};
		\node [style=none] (6) at (-1.75, -0) {};
		\node [style=none] (7) at (0, 0.5) {};
	\end{pgfonlayer}
	\begin{pgfonlayer}{edgelayer}
		\draw[line width=2pt] (0.center) to (1.center);
		\draw[line width=2pt] [in=0, out=-120, looseness=1.20] (1.center) to (2.center);
		\draw[line width=2pt] [in=0, out=120, looseness=1.20] (1.center) to (3.center);
		\draw[line width=2pt] (6.center) to (4);
		\draw[line width=2pt] [in=180, out=-60, looseness=1.20] (4) to (5.center);
		\draw[line width=2pt] [in=180, out=60, looseness=1.20] (4) to (7.center);
	\end{pgfonlayer}
\end{tikzpicture}
    }
  \end{aligned}
}
\newcommand{\extral}[1]
{
  \begin{aligned}
    \resizebox{#1}{!}{
\begin{tikzpicture}
	\begin{pgfonlayer}{nodelayer}
		\node [style=none] (0) at (1.75, -0) {};
		\node [style=circ] (1) at (0.75, -0) {};
		\node [style=circ] (4) at (-0.75, -0) {};
		\node [style=none] (6) at (-1.75, -0) {};
	\end{pgfonlayer}
	\begin{pgfonlayer}{edgelayer}
	  \draw[line width=2pt] (1.center) to (4.center);
	\end{pgfonlayer}
\end{tikzpicture}
    }
  \end{aligned}
}
\newcommand{\extrar}[1]
{
  \begin{aligned}
    \resizebox{#1}{!}{
\begin{tikzpicture}
	\begin{pgfonlayer}{nodelayer}
		\node [style=none] (0) at (1.75, -0) {};
		\node [style=none] (6) at (-1.75, -0) {};
	\end{pgfonlayer}
\end{tikzpicture}
    }
  \end{aligned}
}
\title{Corelations are the prop for extraspecial commutative Frobenius monoids}
\author{Brandon Coya and Brendan Fong}
  \thanks{We thank John Baez for many useful discussions and advice, and Jason
    Erbele for comments on a draft. BF thanks the Clarendon Fund, Hertford
    College, and the Queen Elizabeth Scholarships, Oxford for their support.
  }
  \address{\vspace{-7ex}
    \begin{multicols}{2}
    Department of Computer Science \\
    University of Oxford \\
    United Kingdom OX1 3QD
\columnbreak \\ 
    Department of Mathematics \\
    University of Pennsylvania \\
    USA 19104
\end{multicols}
    Department of Mathematics \\
    University of California Riverside \\
    USA 92521
  }
  \keywords{corelation, extra law, Frobenius monoid, prop, PROP}
\begin{document}   

\maketitle

\begin{abstract}
  Just as binary relations between sets may be understood as jointly monic
  spans, so too may equivalence relations on the disjoint union of sets be
  understood as jointly epic cospans. With the ensuing notion of composition
  inherited from the pushout of cospans, we call these equivalence relations
  \emph{corelations}. We define the category of corelations between finite sets
  and prove that it is equivalent to the prop for extraspecial commutative
  Frobenius monoids.  Dually, we show that the category of relations is
  equivalent to the prop for special commutative bimonoids. Throughout, we
  emphasise how corelations model interconnection. 
\end{abstract}

\section{Introduction}
It is well-known that the category of relations between finite sets may
be obtained as the category of isomorphism classes of jointly monic spans in the
category of finite sets and functions. In this paper we investigate the dual
notion: isomorphism classes of jointly epic cospans. These are known as
corelations, and corelations from a set $X$ to a set $Y$ are characterised as
partitions of the disjoint union $X+Y$.

Our slogan is `corelations model connection'. We understand a corelation as a
partition of two sets into connected components, depicting examples as follows
\[
  \begin{tikzpicture}[circuit ee IEC]
	\begin{pgfonlayer}{nodelayer}
		\node [contact, outer sep=5pt] (0) at (-2, 1) {};
		\node [contact, outer sep=5pt] (1) at (-2, 0.5) {};
		\node [contact, outer sep=5pt] (2) at (-2, -0) {};
		\node [contact, outer sep=5pt] (3) at (-2, -0.5) {};
		\node [contact, outer sep=5pt] (4) at (-2, -1) {};
		\node [contact, outer sep=5pt] (5) at (1, 1.25) {};
		\node [contact, outer sep=5pt] (6) at (1, 0.75) {};
		\node [contact, outer sep=5pt] (7) at (1, 0.25) {};
		\node [contact, outer sep=5pt] (8) at (1, -0.25) {};
		\node [contact, outer sep=5pt] (9) at (1, -0.75) {};
		\node [contact, outer sep=5pt] (10) at (1, -1.25) {};
		\node [style=none] (11) at (-2.75, -0) {$X$};
		\node [style=none] (12) at (1.75, -0) {$Y$};
	\end{pgfonlayer}
	\begin{pgfonlayer}{edgelayer}
		\draw [rounded corners=5pt, dashed] 
   (node cs:name=0, anchor=north west) --
   (node cs:name=1, anchor=south west) --
   (node cs:name=6, anchor=south east) --
   (node cs:name=5, anchor=north east) --
   cycle;
		\draw [rounded corners=5pt, dashed] 
   (node cs:name=2, anchor=north west) --
   (node cs:name=3, anchor=south west) --
   (node cs:name=3, anchor=south east) --
   (node cs:name=2, anchor=north east) --
   cycle;
		\draw [rounded corners=5pt, dashed] 
   (node cs:name=4, anchor=north west) --
   (node cs:name=4, anchor=south west) --
   (node cs:name=10, anchor=south east) --
   (node cs:name=9, anchor=north east) --
   cycle;
   		\draw [rounded corners=5pt, dashed] 
   (node cs:name=7, anchor=north west) --
   (node cs:name=7, anchor=south west) --
   (node cs:name=7, anchor=south east) --
   (node cs:name=7, anchor=north east) --
   cycle;
   		\draw [rounded corners=5pt, dashed] 
   (node cs:name=8, anchor=north west) --
   (node cs:name=8, anchor=south west) --
   (node cs:name=8, anchor=south east) --
   (node cs:name=8, anchor=north east) --
   cycle;
	\end{pgfonlayer}
\end{tikzpicture}
\]
Here we have a corelation from a set $X$ of five elements to a set $Y$ of six
elements. Elements belonging to the same equivalence class of $X+Y$ are grouped
(`connected') by a dashed line.

Composition of corelations takes the transitive closure of the two partitions,
before restricting the partition to the new domain and codomain. For example,
suppose in addition to the corelation $\alpha\maps X \to Y$ above we have
another corelation $\beta\maps Y \to Z$
\[
\begin{tikzpicture}[circuit ee IEC]
	\begin{pgfonlayer}{nodelayer}
		\node [style=none] (0) at (-2.75, -0) {$Y$};
		\node [style=none] (1) at (1.75, 0) {$Z$};
		\node [contact, outer sep=5pt] (2) at (-2, 1.25) {};
		\node [contact, outer sep=5pt] (3) at (-2, 0.75) {};
		\node [contact, outer sep=5pt] (4) at (-2, 0.25) {};
		\node [contact, outer sep=5pt] (5) at (-2, -0.25) {};
		\node [contact, outer sep=5pt] (6) at (-2, -0.75) {};
		\node [contact, outer sep=5pt] (7) at (-2, -1.25) {};
		\node [contact, outer sep=5pt] (8) at (1, 1) {};
		\node [contact, outer sep=5pt] (9) at (1, 0.5) {};
		\node [contact, outer sep=5pt] (10) at (1, -0) {};
		\node [contact, outer sep=5pt] (11) at (1, -0.5) {};
		\node [contact, outer sep=5pt] (12) at (1, -1) {};
	\end{pgfonlayer}
		\draw [rounded corners=5pt, dashed] 
   (node cs:name=2, anchor=north west) --
   (node cs:name=3, anchor=south west) --
   (node cs:name=8, anchor=south east) --
   (node cs:name=8, anchor=north east) --
   cycle;
		\draw [rounded corners=5pt, dashed] 
   (node cs:name=4, anchor=north west) --
   (node cs:name=4, anchor=south west) --
   (node cs:name=4, anchor=south east) --
   (node cs:name=4, anchor=north east) --
   cycle;
		\draw [rounded corners=5pt, dashed] 
   (node cs:name=5, anchor=north west) --
   (node cs:name=6, anchor=south west) --
   (node cs:name=11, anchor=south east) --
   (node cs:name=10, anchor=north east) --
   cycle;
		\draw [rounded corners=5pt, dashed] 
   (node cs:name=7, anchor=north west) --
   (node cs:name=7, anchor=south west) --
   (node cs:name=12, anchor=south east) --
   (node cs:name=12, anchor=north east) --
   cycle;
		\draw [rounded corners=5pt, dashed] 
   (node cs:name=9, anchor=north west) --
   (node cs:name=9, anchor=south west) --
   (node cs:name=9, anchor=south east) --
   (node cs:name=9, anchor=north east) --
   cycle;
\end{tikzpicture}
\]
Then the composite $\beta\circ\alpha$ of our two corelations is given by
\vspace{-1ex}
\[
  \begin{aligned}
\begin{tikzpicture}[circuit ee IEC]
	\begin{pgfonlayer}{nodelayer}
		\node [contact, outer sep=5pt] (-2) at (1, 1.25) {};
		\node [contact, outer sep=5pt] (-1) at (1, 0.75) {};
		\node [contact, outer sep=5pt] (0) at (1, 0.25) {};
		\node [contact, outer sep=5pt] (1) at (1, -0.25) {};
		\node [contact, outer sep=5pt] (2) at (1, -0.75) {};
		\node [contact, outer sep=5pt] (3) at (1, -1.25) {};
		\node [style=none] (4) at (-2.75, -0) {$X$};
		\node [style=none] (5) at (4.75, -0) {$Z$};
		\node [contact, outer sep=5pt] (6) at (-2, 1) {};
		\node [contact, outer sep=5pt] (7) at (-2, -0.5) {};
		\node [contact, outer sep=5pt] (8) at (-2, 0.5) {};
		\node [contact, outer sep=5pt] (9) at (-2, -0) {};
		\node [contact, outer sep=5pt] (10) at (-2, -1) {};
		\node [contact, outer sep=5pt] (11) at (4, -0) {};
		\node [contact, outer sep=5pt] (12) at (4, -1) {};
		\node [contact, outer sep=5pt] (13) at (4, -0.5) {};
		\node [contact, outer sep=5pt] (14) at (4, 0.5) {};
		\node [contact, outer sep=5pt] (19) at (4, 1) {};
		\node [style=none] (20) at (1, -1.75) {$Y$};
		\node [style=none] (21) at (1, 1.75) {\phantom{$Y$}};
	\end{pgfonlayer}
	\begin{pgfonlayer}{edgelayer}
		\draw [rounded corners=5pt, dashed] 
   (node cs:name=6, anchor=north west) --
   (node cs:name=8, anchor=south west) --
   (node cs:name=-1, anchor=south east) --
   (node cs:name=-2, anchor=north east) --
   cycle;
		\draw [rounded corners=5pt, dashed] 
   (node cs:name=9, anchor=north west) --
   (node cs:name=7, anchor=south west) --
   (node cs:name=7, anchor=south east) --
   (node cs:name=9, anchor=north east) --
   cycle;
		\draw [rounded corners=5pt, dashed] 
   (node cs:name=10, anchor=north west) --
   (node cs:name=10, anchor=south west) --
   (node cs:name=3, anchor=south east) --
   (node cs:name=2, anchor=north east) --
   cycle;
		\draw [rounded corners=5pt, dashed] 
   (node cs:name=-2, anchor=north west) --
   (node cs:name=-1, anchor=south west) --
   (node cs:name=19, anchor=south east) --
   (node cs:name=19, anchor=north east) --
   cycle;
		\draw [rounded corners=5pt, dashed] 
   (node cs:name=0, anchor=north west) --
   (node cs:name=0, anchor=south west) --
   (node cs:name=0, anchor=south east) --
   (node cs:name=0, anchor=north east) --
   cycle;
		\draw [rounded corners=5pt, dashed] 
   (node cs:name=1, anchor=north west) --
   (node cs:name=1, anchor=south west) --
   (node cs:name=1, anchor=south east) --
   (node cs:name=1, anchor=north east) --
   cycle;
		\draw [rounded corners=5pt, dashed] 
   (node cs:name=1, anchor=north west) --
   (node cs:name=2, anchor=south west) --
   (node cs:name=13, anchor=south east) --
   (node cs:name=11, anchor=north east) --
   cycle;
		\draw [rounded corners=5pt, dashed] 
   (node cs:name=3, anchor=north west) --
   (node cs:name=3, anchor=south west) --
   (node cs:name=12, anchor=south east) --
   (node cs:name=12, anchor=north east) --
   cycle;
		\draw [rounded corners=5pt, dashed] 
   (node cs:name=14, anchor=north west) --
   (node cs:name=14, anchor=south west) --
   (node cs:name=14, anchor=south east) --
   (node cs:name=14, anchor=north east) --
   cycle;
	\end{pgfonlayer}
\end{tikzpicture}
\end{aligned}
\:
  =
\:
\begin{aligned}
\begin{tikzpicture}[circuit ee IEC]
	\begin{pgfonlayer}{nodelayer}
		\node [style=none] (0) at (-2.75, -0) {$X$};
		\node [style=none] (1) at (1.75, -0) {$Z$};
		\node [contact, outer sep=5pt] (2) at (-2, 1) {};
		\node [contact, outer sep=5pt] (3) at (-2, -0.5) {};
		\node [contact, outer sep=5pt] (4) at (-2, 0.5) {};
		\node [contact, outer sep=5pt] (5) at (-2, -0) {};
		\node [contact, outer sep=5pt] (6) at (-2, -1) {};
		\node [contact, outer sep=5pt] (7) at (1, -0) {};
		\node [contact, outer sep=5pt] (8) at (1, -1) {};
		\node [contact, outer sep=5pt] (9) at (1, -0.5) {};
		\node [contact, outer sep=5pt] (10) at (1, 0.5) {};
		\node [contact, outer sep=5pt] (13) at (1, 1) {};
		\node [style=none] (20) at (1, -1.75) {\phantom{$Y$}};
		\node [style=none] (21) at (1, 1.75) {\phantom{$Y$}};
	\end{pgfonlayer}
	\begin{pgfonlayer}{edgelayer}
		\draw [rounded corners=5pt, dashed] 
   (node cs:name=2, anchor=north west) --
   (node cs:name=4, anchor=south west) --
   (node cs:name=13, anchor=south east) --
   (node cs:name=13, anchor=north east) --
   cycle;
		\draw [rounded corners=5pt, dashed] 
   (node cs:name=5, anchor=north west) --
   (node cs:name=3, anchor=south west) --
   (node cs:name=3, anchor=south east) --
   (node cs:name=5, anchor=north east) --
   cycle;
		\draw [rounded corners=5pt, dashed] 
   (node cs:name=6, anchor=north west) --
   (node cs:name=6, anchor=south west) --
   (node cs:name=8, anchor=south east) --
   (node cs:name=7, anchor=north east) --
   cycle;
		\draw [rounded corners=5pt, dashed] 
   (node cs:name=10, anchor=north west) --
   (node cs:name=10, anchor=south west) --
   (node cs:name=10, anchor=south east) --
   (node cs:name=10, anchor=north east) --
   cycle;
	\end{pgfonlayer}
\end{tikzpicture}
\end{aligned}
\]
Informally, this captures the idea that two elements of $X+Z$ are `connected' if
we may travel from one to the other staying within connected components of
$\alpha$ and $\beta$.

Another structure that axiomatises interconnection is the extraspecial
commutative Frobenius monoid. An extraspecial commutative Frobenius monoid in a
symmetric monoidal category is an object equipped with commutative monoid and
cocommutative comonoid structures obeying additional laws known as the
Frobenius, special, and extra laws.  Special commutative Frobenius monoids are
well-known; the additional axiom here, the so-called extra law, requires that
the unit composed with the counit is the identity on the unit for the monoidal
product. We write this in string diagrams as
\[
  \extral{.15\textwidth} = \extrar{.15\textwidth}
\]
Together, these axioms express the idea that connectivity is all that matters:
not pairwise clustering, not multiple paths, not `extra', interior components.

Corelations and extraspecial commutative Frobenius monoids are intimately
related. To explicate this relationship, we will use the language of props.
Recall that a prop\footnote{Often stylised PROP, for PROduct and Permutation
category.} is a strict symmetric monoidal category with objects the natural
numbers and monoidal product addition. Also recall that a prop $\mathcal T$ is
termed the prop for an algebraic structure if, given another symmetric monoidal
category $\mathcal{C}$, the strict symmetric monoidal functor category
$\mathcal{C}^{\mathcal T}$ is isomorphic to the category of the chosen algebraic
structure in $\mathcal{C}$. 

Considered as symmetric monoidal categories with monoidal product the disjoint
union, Lack proved that the category of spans in the category of finite sets and
functions is equivalent as a symmetric monoidal category to the prop for
bicommutative bimonoids, and the category of cospans is equivalent as a
symmetric monoidal category to the prop for special commutative Frobenius
monoids \cite{La}. Note that the disjoint union of finite sets also gives a
monoidal product on the category of corelations. Our main theorem is:
\begin{theorem}
   The category of corelations is equivalent, as a symmetric monoidal category,
   to the prop for extraspecial commutative Frobenius monoids.
\end{theorem}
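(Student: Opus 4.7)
The plan is to leverage Lack's theorem that $\cospan$ is the prop for special commutative Frobenius monoids, and show that the quotient $Q\maps \cospan \to \corel$, sending a cospan $X \to N \leftarrow Y$ to the jointly epic cospan $X \to \im(X+Y \to N) \leftarrow Y$, corresponds exactly to imposing the extra law on top of the special commutative Frobenius structure. I would denote by $\mathbf{ExFrob}$ the free prop on an extraspecial commutative Frobenius monoid; since $\mathbf{ExFrob}$ is the quotient of $\mathbf{SFrob}$ (the prop for special commutative Frobenius monoids) by the single extra relation, it suffices to construct a strict symmetric monoidal functor $\Phi\maps \mathbf{ExFrob} \to \corel$ and show it is an isomorphism of props.

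First I would check that the object $1$ in $\corel$ carries an extraspecial commutative Frobenius monoid structure: the commutative Frobenius structure descends from Lack's structure on $1 \in \cospan$ via $Q$, and the extra law holds because the cospan $\emptyset \to \{*\} \leftarrow \emptyset$, which represents $\counit{.08\textwidth}\circ\unit{.08\textwidth}$, has empty jointly epic image and so becomes the identity $\emptyset \to \emptyset \leftarrow \emptyset$ in $\corel$. The universal property of $\mathbf{ExFrob}$ then yields the desired $\Phi$, which on objects is the identity $n \mapsto n$.

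Fullness of $\Phi$ is immediate from Lack's theorem together with surjectivity of $Q$ on hom-sets: every corelation has a jointly epic cospan representative, which is in the image of some morphism of $\mathbf{SFrob}$, which in turn maps to a morphism of $\mathbf{ExFrob}$. The main obstacle is faithfulness. For this I would establish a normal form: any morphism $m \to n$ in $\mathbf{ExFrob}$ may be written, using the special commutative Frobenius axioms alone, as a tensor product of ``spiders'' (connected generalised $k$-to-$\ell$ operations built from $\mult{.06\textwidth}$, $\comult{.06\textwidth}$, $\unit{.06\textwidth}$, $\counit{.06\textwidth}$), following Lack's analysis of $\cospan$. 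Such a spider of type $k \to \ell$ with $k + \ell \ge 1$ corresponds exactly to a non-empty block of a partition of $m+n$, while spiders of type $0\to 0$ — corresponding to blocks of the apex touched by neither leg — are precisely what the extra law $\extral{.1\textwidth} = \extrar{.1\textwidth}$ reduces to the identity on the monoidal unit. Thus a diagram in normal form is determined, modulo the Frobenius and extra axioms, by a partition of $m+n$ into non-empty blocks, i.e. exactly by a corelation from $m$ to $n$.

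The hard part will be verifying that the extra law, combined with the special commutative Frobenius laws, really is enough to erase \emph{all} redundant $0 \to 0$ spiders — in particular, that a free-floating scalar obtained by composing complicated diagrams can always be pushed into a disjoint union of $\counit{.06\textwidth}\circ\unit{.06\textwidth}$ components before being deleted. I expect this to follow from the spider normal form theorem for special commutative Frobenius monoids, together with a careful bookkeeping argument: once a diagram is in spider normal form, the extra law applies componentwise, and the resulting equivalence classes in $\mathbf{ExFrob}$ biject with partitions of $m+n$ into non-empty blocks, matching the hom-sets of $\corel$. Monoidality and compatibility with symmetry are inherited from Lack's functor $\mathbf{SFrob} \xrightarrow{\sim} \cospan$ by construction.
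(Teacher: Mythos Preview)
Your proposal is correct, but it takes a different route from the paper. The paper argues abstractly via coequalizers of props: it shows that $\mathbf{Th(ESCFM)}$ is the coequalizer of the pair $FE_{\mathrm{Ex}} \rightrightarrows \mathbf{Th(SCFM)}$ (imposing only the extra law on top of the SCFM theory), and separately that $\corel$ is the coequalizer of the isomorphic pair $FE_{\mathrm{Ex}} \rightrightarrows \cospan$ obtained by transporting along Lack's isomorphism $\alpha$. The latter is checked by a short direct computation showing that any cospan $n \to a \leftarrow m$ and its jointly epic part $n \to \im[f,g] \leftarrow m$ have the same image under any prop map out of $\cospan$ that identifies $(0 \to 1 \leftarrow 0)$ with $(0 \to 0 \leftarrow 0)$. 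Since coequalizers of isomorphic diagrams are isomorphic, the result follows without ever invoking a normal form.

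Your argument instead establishes the isomorphism by explicit fullness and faithfulness, using the spider normal form in $\mathbf{SFrob}\cong\cospan$ and then observing that the extra law deletes precisely the $0\to 0$ spiders. This is sound, and in fact the ``hard part'' you flag is not really hard: once Lack's result is granted, a morphism in $\mathbf{SFrob}$ \emph{is} a cospan, the isolated apex elements \emph{are} tensor factors equal to $\epsilon\circ\eta$, and the extra law removes them one at a time. Your approach is more concrete and closer in spirit to the normal-form arguments of Do\v{s}en--Petri\'c and Zanasi that the paper cites; the paper's coequalizer packaging is shorter, makes the duality with the special law (and the $\Span/\rel$ story) more transparent, and generalises more readily to other corelation categories such as linear relations.
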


Corelations and extraspecial commutative Frobenius monoids have been observed to
play a key role in many frameworks relying on the interconnection of systems,
including electrical circuits \cite{BF}, signal flow graphs \cite{BE,BSZ}, bond
graphs \cite{BC}, linear time-invariant systems \cite{FRS}, automata \cite{RSW},
proofs \cite{DP1}, and matrices and other linear systems \cite{Za}. The mutual
characterisation of these structures provided by our main theorem clarifies and
streamlines arguments in many of these applications. For example, the use of
corelations provides a precise extraspecial commutative Frobenius monoid
extension of the well-known `spider theorem' characterising morphisms between
tensor powers of a special commutative Frobenius monoid \cite{CK,CPP}.

In independent but related work, Zanasi proves in his recent thesis
\cite[\textsection 2.5]{Za} that the so-called prop of equivalence relations is
the free prop on the theory of extraspecial commutative Frobenius monoids. As we
do, Zanasi builds on Lack's observation that category of cospans in the category
of finite sets and functions is equivalent to the prop for special commutative
Frobenius monoids \cite{La}, as well as the observation of Bruni and Gadducci
that cospans are related to equivalence relations \cite{BG}. Zanasi argues via a
so-called `cube construction', or fibred sum of props. 

Similarly, Do\v{s}en and Petri\'c \cite[\textsection 9]{DP2} prove that the
category of `split equivalences' is isomorphic to the `equivalential Frobenius
monad freely generated by a single object'. They argue this analogous result by
constructing an auxiliary syntactic category isomorphic to the equivalential
Frobenius monad freely generated by a single object, and then inducting on the
terms of this new category to prove the main result.

Our novel approach through the understanding of corelations as jointly epic
cospans permits a significantly simpler argument via a coequalizer of props. In
doing so, it provides a clear narrative for the origin of the extra law and its
relationship with other fundamental axioms. Moreover, such an approach is
philosophically well-motivated, and provides easy generalisation, such as the
characterisation of linear relations as jointly epic cospans in the category of
matrices over a field \cite{Fo}, or linear time-invariant systems as jointly
epic cospans in the category of matrices over a relevant Laurent polynomial ring
\cite{FRS}.

Ultimately, our work completes the beautiful picture
\[
\begin{tabular}{c|c}
  \textbf{spans} & \textbf{cospans} \\
  bicommutative bimonoids & special commutative Frobenius monoids \\ \hline
  \textbf{relations} & \textbf{corelations} \\
  special bicommutative bimonoids & extraspecial commutative Frobenius monoids \\
\end{tabular}
\]
pairing constructions on the category of finite sets and functions with
important algebraic structures. The duality of the bimonoid laws and the
Frobenius law, the two major ways that a monoid and comonoid can interact, was
demonstrated by Lack \cite{La}. Moving this duality to the level of relations,
we show that the importance of the heretofore overlooked extra law as the dual
version of the special law.

\subsection{Outline} In the next two sections we introduce corelations and
extraspecial commutative Frobenius monoids respectively. These are the stars of
this paper, and our task will be to understand their relationship. To this end,
in Section \ref{sec.props} we review the idea of a prop for an algebraic
structure, and note that the category of corelations between finite sets is
equivalent to a prop $\corel$. In Section \ref{sec.theories}, we then construct
a prop $\mathbf{Th(ESCFM)}$ whose algebras are extraspecial commutative
Frobenius monoids. We show in Section~\ref{sec.mainthm} that $\corel$ and
$\mathbf{Th(ESCFM)}$ are isomorphic, proving the main theorem.  Finally, in
Section \ref{sec.summary} we outline the dual characterisation of the category
of relations, and summarise the algebraic theories corresponding to spans,
cospans, relations, and corelations. 

\section{Corelations} \label{sec.corel}

First we define corelations. Corelations arise as the dual of relations: recall
that a binary relation from a set $X$ to a set $Y$ is a subset of the product $X
\times Y$. A corelation is a quotient of the coproduct $X+Y$.

\begin{definition}
  A \define{corelation} $\alpha\maps X \to Y$ between sets $X$ and $Y$ is a
  partition of $X+Y$.

  Given another corelation $\beta\maps Y \to Z$, the composite $\beta \circ
  \alpha \maps X \to Z$ is the restriction to $X+Z$ of the finest partition on
  $X+Y+Z$ that is coarser than both $\alpha$ and $\beta$ when restricted to
  $X+Y$ and $Y+Z$ respectively.
\end{definition}

This composition is associative as both pairwise methods of composing
corelations $\alpha: X \to Y$, $\beta: Y \to Z$, and $\gamma: Z \to W$ amount to
finding the finest partition on $X+Y+Z+W$ that is coarser than each of $\alpha$,
$\beta$, and $\gamma$ when restricted to the relevant subset, and then
restricting this partition to a partition on $X+W$; reference to the motivating
visualization makes this clear. Moreover, this composition has an identity: it
is the partition of $X+X$ such that each equivalence class comprises exactly two
elements, an element $x \in X$ considered as an element of both the first and
then the second summand of $X+X$. 

This allows us to define a category. We restrict our attention to corelations
between finite sets.

\begin{definition}
  Let $\corel$ be the symmetric monoidal category with objects finite sets,
  morphisms corelations between finite sets, and monoidal product disjoint
  union.
\end{definition}

We shall freely abuse the notation $\corel$ to refer to an equivalent
skeleton. This is key for our main theorem: in Section~\ref{sec.props} we
show this skeleton is strict, and hence a prop.

Ellerman gives a detailed treatment of corelations from a logic viewpoint in
\cite{El}, while basic category theoretic aspects can be found in Lawvere and
Rosebrugh \cite{LR}. Note that neither binary relations nor corelations are a
generalisation of the other. A key property of corelation is that it forms a
compact category with monoidal product disjoint union of sets. This is not true
of the category of relations.

Another way of visualising corelations and their composition is as terminals
connected by junctions of ideal wires. We draw these by marking each equivalence
class with a point (the `junction'), and then connecting each element of the
domain and codomain to their equivalence class with a `wire'. Composition then
involves collapsing connected junctions down to a point. The example from the
introduction is represented as follows.
\vspace{-1ex}
\[
  \begin{aligned}
\begin{tikzpicture}[circuit ee IEC]
	\begin{pgfonlayer}{nodelayer}
		\node [contact, outer sep=5pt] (6) at (-2, 1) {};
		\node [contact, outer sep=5pt] (7) at (-2, -0.5) {};
		\node [contact, outer sep=5pt] (8) at (-2, 0.5) {};
		\node [contact, outer sep=5pt] (9) at (-2, -0) {};
		\node [contact, outer sep=5pt] (10) at (-2, -1) {};
		\node [style=none] (15) at (-0.5, 0.875) {};
		\node [style=none] (28) at (-0.5, 0.25) {};
		\node [style=none] (16) at (-0.5, -0.125) {};
		\node [style=none] (29) at (-0.5, -0.375) {};
		\node [style=none] (17) at (-0.5, -1) {};
		\node [contact, outer sep=5pt] (-2) at (1, 1.25) {};
		\node [contact, outer sep=5pt] (-1) at (1, 0.75) {};
		\node [contact, outer sep=5pt] (0) at (1, 0.25) {};
		\node [contact, outer sep=5pt] (1) at (1, -0.25) {};
		\node [contact, outer sep=5pt] (2) at (1, -0.75) {};
		\node [contact, outer sep=5pt] (3) at (1, -1.25) {};
		\node [style=none] (18) at (2.5, -1.125) {};
		\node [style=none] (21) at (2.5, 1) {};
		\node [style=none] (22) at (2.5, -0.375) {};
		\node [style=none] (23) at (2.5, 0.475) {};
		\node [style=none] (24) at (2.5, 0.25) {};
		\node [contact, outer sep=5pt] (19) at (4, 1) {};
		\node [contact, outer sep=5pt] (14) at (4, 0.5) {};
		\node [contact, outer sep=5pt] (11) at (4, -0) {};
		\node [contact, outer sep=5pt] (13) at (4, -0.5) {};
		\node [contact, outer sep=5pt] (12) at (4, -1) {};
		\node [style=none] (4) at (-2.75, -0) {$X$};
		\node [style=none] (5) at (4.75, -0) {$Z$};
		\node [style=none] (20) at (1, -1.75) {$Y$};
		\node [style=none] (30) at (1, 1.75) {\phantom{$Y$}};
	\end{pgfonlayer}
	\begin{pgfonlayer}{edgelayer}
		\draw [thick] (6.center) to (15.center);
		\draw [thick] (8.center) to (15.center);
		\draw [thick] (-2.center) to (15.center);
		\draw [thick] (-1.center) to (15.center);
		\draw [thick] (9.center) to (16.center);
		\draw [thick] (7.center) to (16.center);
		\draw [thick] (10.center) to (17.center);
		\draw [thick] (17.center) to (2.center);
		\draw [thick] (17.center) to (3.center);
		\draw [thick] (3.center) to (18.center);
		\draw [thick] (18.center) to (12.center);
		\draw [thick] (-2.center) to (21.center);
		\draw [thick] (-1.center) to (21.center);
		\draw [thick] (21.center) to (19.center);
		\draw [thick] (1.center) to (22.center);
		\draw [thick] (2.center) to (22.center);
		\draw [thick] (22.center) to (11.center);
		\draw [thick] (22.center) to (13.center);
		\draw [thick] (23.center) to (14.center);
		\draw [thick] (28.center) to (0.center);
		\draw [thick] (0.center) to (24.center);
		\draw [thick] (29.center) to (1.center);
		\draw [rounded corners=5pt, dashed, color=gray] 
   (node cs:name=6, anchor=north west) --
   (node cs:name=8, anchor=south west) --
   (node cs:name=-1, anchor=south east) --
   (node cs:name=-2, anchor=north east) --
   cycle;
		\draw [rounded corners=5pt, dashed, color=gray] 
   (node cs:name=9, anchor=north west) --
   (node cs:name=7, anchor=south west) --
   (node cs:name=7, anchor=south east) --
   (node cs:name=9, anchor=north east) --
   cycle;
		\draw [rounded corners=5pt, dashed, color=gray] 
   (node cs:name=10, anchor=north west) --
   (node cs:name=10, anchor=south west) --
   (node cs:name=3, anchor=south east) --
   (node cs:name=2, anchor=north east) --
   cycle;
		\draw [rounded corners=5pt, dashed, color=gray] 
   (node cs:name=-2, anchor=north west) --
   (node cs:name=-1, anchor=south west) --
   (node cs:name=19, anchor=south east) --
   (node cs:name=19, anchor=north east) --
   cycle;
		\draw [rounded corners=5pt, dashed, color=gray] 
   (node cs:name=0, anchor=north west) --
   (node cs:name=0, anchor=south west) --
   (node cs:name=0, anchor=south east) --
   (node cs:name=0, anchor=north east) --
   cycle;
		\draw [rounded corners=5pt, dashed, color=gray] 
   (node cs:name=1, anchor=north west) --
   (node cs:name=1, anchor=south west) --
   (node cs:name=1, anchor=south east) --
   (node cs:name=1, anchor=north east) --
   cycle;
		\draw [rounded corners=5pt, dashed, color=gray] 
   (node cs:name=1, anchor=north west) --
   (node cs:name=2, anchor=south west) --
   (node cs:name=13, anchor=south east) --
   (node cs:name=11, anchor=north east) --
   cycle;
		\draw [rounded corners=5pt, dashed, color=gray] 
   (node cs:name=3, anchor=north west) --
   (node cs:name=3, anchor=south west) --
   (node cs:name=12, anchor=south east) --
   (node cs:name=12, anchor=north east) --
   cycle;
		\draw [rounded corners=5pt, dashed, color=gray] 
   (node cs:name=14, anchor=north west) --
   (node cs:name=14, anchor=south west) --
   (node cs:name=14, anchor=south east) --
   (node cs:name=14, anchor=north east) --
   cycle;
	\end{pgfonlayer}
\end{tikzpicture}
\end{aligned}
\:
  =
\:
\begin{aligned}
\begin{tikzpicture}[circuit ee IEC]
	\begin{pgfonlayer}{nodelayer}
		\node [style=none] (0) at (-2.75, -0) {$X$};
		\node [style=none] (1) at (1.75, -0) {$Z$};
		\node [contact, outer sep=5pt] (2) at (-2, 1) {};
		\node [contact, outer sep=5pt] (3) at (-2, -0.5) {};
		\node [contact, outer sep=5pt] (4) at (-2, 0.5) {};
		\node [contact, outer sep=5pt] (5) at (-2, -0) {};
		\node [contact, outer sep=5pt] (6) at (-2, -1) {};
		\node [contact, outer sep=5pt] (7) at (1, -0) {};
		\node [contact, outer sep=5pt] (8) at (1, -1) {};
		\node [contact, outer sep=5pt] (9) at (1, -0.5) {};
		\node [contact, outer sep=5pt] (10) at (1, 0.5) {};
		\node [style=none] (11) at (-0.5, 0.875) {};
		\node [style=none] (12) at (-0.5, 0.3) {};
		\node [contact, outer sep=5pt] (13) at (1, 1) {};
		\node [style=none] (14) at (-0.5, -0.2) {};
		\node [style=none] (15) at (-0.5, -0.6) {};
	\end{pgfonlayer}
	\begin{pgfonlayer}{edgelayer}
		\draw [thick] (2.center) to (11.center);
		\draw [thick] (4.center) to (11.center);
		\draw [thick] (11.center) to (13.center);
		\draw [thick] (5.center) to (14.center);
		\draw [thick] (3.center) to (14.center);
		\draw [thick] (15.center) to (7.center);
		\draw [thick] (15.center) to (9.center);
		\draw [thick] (6.center) to (15.center);
		\draw [thick] (15.center) to (8.center);
		\draw [thick] (12.center) to (10.center);
		\draw [rounded corners=5pt, dashed, color=gray] 
   (node cs:name=2, anchor=north west) --
   (node cs:name=4, anchor=south west) --
   (node cs:name=13, anchor=south east) --
   (node cs:name=13, anchor=north east) --
   cycle;
		\draw [rounded corners=5pt, dashed, color=gray] 
   (node cs:name=5, anchor=north west) --
   (node cs:name=3, anchor=south west) --
   (node cs:name=3, anchor=south east) --
   (node cs:name=5, anchor=north east) --
   cycle;
		\draw [rounded corners=5pt, dashed, color=gray] 
   (node cs:name=10, anchor=north west) --
   (node cs:name=10, anchor=south west) --
   (node cs:name=10, anchor=south east) --
   (node cs:name=10, anchor=north east) --
   cycle;
		\draw [rounded corners=5pt, dashed, color=gray] 
   (node cs:name=6, anchor=north west) --
   (node cs:name=6, anchor=south west) --
   (node cs:name=8, anchor=south east) --
   (node cs:name=7, anchor=north east) --
   cycle;
	\end{pgfonlayer}
\end{tikzpicture}
\end{aligned}
\]
Again, the composition law captures the idea that connectivity is all that
matters: as long as the wires are `ideal', the exact path does not matter. The
application to electrical circuits is discussed in detail in \cite{BF}.

This visualisation mimics the string diagrams defining extraspecial commutative
Frobenius monoids.

\section{Extraspecial commutative Frobenius monoids} \label{sec.escfm}

We introduce extraspecial commutative Frobenius monoids in some detail, writing
our axioms using the string calculus for monoidal categories introduced by Joyal
and Street \cite{JS}. Diagrams will be read left to right, and we shall suppress
the labels as we deal with a unique generating object and a unique generator of
each type. While we expect that the algebraic structures below---monoids,
comonoids, and so on---are familiar to most readers, we include the additional
detail to underscore the similarity between the wire diagrams for corelations
and string diagrams for extraspecial commutative Frobenius monoids. Again, we
shall see that the laws defining this structure express the principle that
connectivity is all that matters.

Recall that a \define{commutative monoid} $(X,\mu,\eta)$ in symmetric monoidal
category $(\mathcal C,\otimes)$ is an object $X$ of $\mathcal C$ together with
maps 
\[
  \xymatrixrowsep{1pt}
  \xymatrix{
    \mult{.075\textwidth} & & \unit{.075\textwidth} \\
    \mu\maps X\otimes X \to X & & \eta\maps I \to X
  }
\]
obeying
\[
  \xymatrixrowsep{1pt}
  \xymatrixcolsep{25pt}
  \xymatrix{
    \assocl{.1\textwidth} = \assocr{.1\textwidth} & \unitl{.1\textwidth} =
    \idone{.1\textwidth} & \commute{.1\textwidth} = \mult{.07\textwidth} \\
    \textrm{(associativity)} & \textrm{(unitality)} & \textrm{(commutativity)}
  }
\]
where $\swap{1em}$ is the braiding on $X \otimes X$. In addition to the
`upper' unitality law above, the mirror image `lower' unitality law also holds,
due to commutativity and the naturality of the braiding.  

Dually, a \define{cocommutative comonoid} $(X,\mu,\eta)$ in $\mathcal C$ is an
object $X$ together with maps 
\[
  \xymatrixrowsep{1pt}
  \xymatrix{
    \comult{.075\textwidth} & & \counit{.075\textwidth} \\
    \delta\maps X\to X \otimes X & & \epsilon\maps X \to I
  }
\]
obeying
\[
  \xymatrixrowsep{1pt}
  \xymatrixcolsep{25pt}
  \xymatrix{
    \coassocl{.1\textwidth} = \coassocr{.1\textwidth} & \counitl{.1\textwidth} =
    \idone{.1\textwidth} & \cocommute{.1\textwidth} = \comult{.07\textwidth} \\
    \textrm{(coassociativity)} & \textrm{(counitality)} &
    \textrm{(cocommutativity)}
  }
\]
Given a monoid and comonoid on the same object, there are two well-known ways
for them to interact: the bimonoid laws and the Frobenius law. We shall discuss
both in this paper, but for now we restrict our attention to Frobenius structure. 

\begin{definition}
  An \define{extraspecial commutative Frobenius monoid}
  $(X,\mu,\eta,\delta,\epsilon)$ in a monoidal category $(\mathcal C, \otimes)$
  comprises a commutative monoid $(X,\mu,\eta)$ and a cocommutative comonoid
  $(X,\delta,\epsilon)$ that further obey
  \[
  \xymatrixrowsep{1pt}
  \xymatrixcolsep{25pt}
  \xymatrix{
    \frobs{.1\textwidth} = \frobx{.1\textwidth} = \frobz{.1\textwidth} & \spec{.1\textwidth} =
    \idone{.1\textwidth} & \extral{.1\textwidth} = \extrar{.1\textwidth} \\
    \textrm{(the Frobenius law)} & \textrm{(the special law)} &
    \textrm{(the extra law)}
  }
  \]
\end{definition}
While we write two equations for the Frobenius law, this is redundant: the
equality of any two of the expressions implies the equality of all three.  Note
that a monoid and comonoid obeying the Frobenius law is commutative if and only
if it is cocommutative.  Thus while a commutative and cocommutative Frobenius
monoid might more properly be called a bicommutative Frobenius monoid, there is
no ambiguity if we only say commutative.

The Frobenius law and the special law go back to Carboni and Walters, under the
names S=X law and the diamond=1 law respectively \cite{CW}. The extra law is a
more recent discovery, appearing first under this name in the work of Baez and
Erbele \cite{BE}, as the `bone law' in \cite{BSZ,FRS}, and as the `irredundancy
law' in \cite{Za}.

Observe that each of these equations equate string diagrams that connect
precisely the same elements of the domain and codomain. To wit, the
associativity, coassociativity, and Frobenius laws show that the order in which
we build a connected component through pairwise clustering is irrelevant, the
special law shows that having multiple connections between points is irrelevant,
and the extra law shows that `extra' components not connected to the domain or
codomain are irrelevant. 

Our main theorem will show that these equations are exactly those required to
have the converse: two morphisms built from the generators of an extraspecial
commutative Frobenius monoid are equal and if and only if their diagrams impose
the same connectivity relations on the disjoint union of the domain and
codomain. This formalises an extension of the well-known spider theorem for
special commutative Frobenius monoids \cite{CK,CPP}. 

\section{Props for theories} \label{sec.props}

Introduced by Mac Lane \cite{ML} to generalise Lawvere's algebraic theories
approach to universal algebra \cite{Law}, the theory of props provides a
framework to discuss algebraic structures with multi-input multi-output
operations.

\begin{definition}
  A \define{prop} is a symmetric strict monoidal category having the natural
  numbers as objects and tensor product given by addition. A morphism of props
  is a symmetric strict identity-on-objects monoidal functor.
  
  If ${\mathcal T}$ is a prop and $\mathcal C$ is a symmetric monoidal category,
  we define an \define{algebra of} ${\mathcal T}$ \define{in} $\mathcal C$ to be a
  symmetric strict monoidal functor ${\mathcal T}\rightarrow \mathcal C$. A
  morphism of algebras of ${\mathcal T}$ in $\mathcal C$ is a monoidal natural
  transformation between them.
\end{definition}

Props allow us to study (one-sorted) symmetric monoidal theories, like those of
monoids, groups, and rings. Instances of these structures arise as algebras, or
models, of props.

\begin{definition}
  A \define{symmetric monoidal theory} $T=(\Sigma, E)$ comprises a
signature $\Sigma$ and a set of equations $E$. A \define{signature} is a set of
generators, where a \define{generator} is a formal symbol $\sigma\maps n \to m$.
From a signature $\Sigma$, we may formally construct the set of $\Sigma$-terms.
Defined inductively, a $\Sigma$\define{-term} takes one of the following forms:
\begin{itemize}
  \item the empty term $\varnothing\maps 0 \to 0$, the unit $\mathrm{id} \maps 1 \to 1$,
    the braiding $\swap{1em}\maps 2 \to 2$;
  \item the generators $\alpha\maps n \to m$ in $\Sigma$; 
  \item $\beta \circ \alpha\maps n \to p$, where $\alpha\maps n \to m$ and
    $\beta\maps m \to p$ are $\Sigma$-terms; or 
  \item $\alpha+\gamma\maps n+p \to m+q$, where $\alpha\maps n
    \to m$ and $\gamma\maps p \to q$ are $\Sigma$-terms.
\end{itemize}
We call $(n,m)$ the \define{type} of a $\Sigma$-term $\tau\maps n \to m$. An
\define{equation} is then a pair of two $\Sigma$-terms with the same type.

A \define{model} for a symmetric monoidal theory in a symmetric monoidal
category $(\mathcal C, \otimes)$ is an object $X$ together with morphisms
$\sigma_X \maps X^{\otimes n} \to X^{\otimes m}$ for every generator
$\sigma\maps n \to m$ in $\Sigma$, such that for every equation the two
$\Sigma$-terms are equal interpreted as morphisms in $\mathcal C$. A morphism
of models $X$ to $Y$ is a morphism $f\maps X \to Y$ in $\mathcal C$ such that
for every generator $\sigma$ we have $f^{\otimes m} \circ \sigma_X = \sigma_Y
\circ f^{\otimes n}\maps X^{\otimes n} \to Y^{\otimes m}$. 
\end{definition}

Many common algebraic structures can be expressed as symmetric monoidal
theories, including all those discussed in the previous section. For example,
the symmetric monoidal theory of commutative monoids has signature $\{\mu\maps 2
\to 1,\, \eta\maps 0 \to 1\}$ and three equations: precisely those pairs of terms
depicted in Section~\ref{sec.escfm}.

\begin{definition}
  We say that a prop ${\mathcal T}$ \define{is the prop for} a symmetric monoidal
  theory $T$ if for all symmetric monoidal categories $\mathcal C$ the category of
  algebras of ${\mathcal T}$ in $\mathcal C$ is equivalent to the category of
  models of $T$ in $\mathcal C$. 
\end{definition}

\begin{example}
  Write $\finset$ for the category of finite sets and functions, and also for
  its equivalent skeleton. This category inherits symmetric monoidal structure
  from the existence of finite coproducts, in this case the disjoint union of
  sets. Fixing a skeleton and, for example, utilising a total order on each set,
  one may choose the unitors and associator to be the identity, resulting a
  symmetric strict monoidal category \cite{Bu}. Thus we may consider $\finset$
  to be a prop.

  It is known that $\finset$ is the prop for commutative monoids \cite{Gr,Pi}.
  Indeed, write $m\maps 2 \to 1$ and $e\maps 0 \to 1$ for the unique maps of
  their type in ${\finset}$. Then given a symmetric monoidal functor
  $F:{\finset} \to \mathcal C$, the tuple $(F1,Fm,Fe)$ is a commutative monoid.
  Conversely, any commutative monoid $(X,\mu,\eta)$ in $\mathcal C$ gives rise
  to a functor ${\finset} \to \mathcal C$ mapping $1$ to $X$, $m$ to $\mu$, and
  $e$ to $\eta$.
\end{example}

We may bootstrap on this construction to show that $\corel$ has a strict
skeleton, and so too may be considered a prop. First, recall that in any
finitely cocomplete category $\mathcal C$ we may construct a symmetric monoidal
bicategory with the same objects and monoidal product, with morphisms cospans in
$\mathcal C$, composition of morphisms given by pushout, and with 2-morphisms
maps between apexes of cospans that commute over the feet \cite{Be}.
Decategorifying, we obtain a monoidal category $\mathbf{Cospan}(\mathcal{C})$,
where morphisms are isomorphism classes of cospans in $\mathcal C$.

Next, call a cospan $X \to N \leftarrow Y$ \define{jointly epic} if the induced
morphism $X+Y \to N$ is an epimorphism. If monomorphisms in $\mathcal C$ are
preserved under pushout, we may construct a symmetric monoidal category
$\mathbf{Corel}(\mathcal{C})$ with objects again those of $\mathcal C$, but this
time morphisms isomorphism classes of \emph{jointly epic} cospans in $\mathcal
C$, and composition taking the pushout of representative cospans, before
corestricting to the jointly epic part \cite{Mi,JW}.\footnote{
  More generally, a category of corelations may
  be constructed from any finitely cocomplete category equipped with a
  $(\mathcal E, \mathcal M)$-factorisation system such that $\mathcal M$ is
  preserved under pushout \cite{JW}. In related papers, we have shown that this
  construction can be used to model interconnection of `black-boxed' systems; that
  is, to model systems in which only the internal structure is obscured, leaving
  only the external behaviour \cite{BF,FRS,Fo}.}
The unitors, associator, and braiding are inherited from $\mathcal C$.

Our category $\corel$ can be constructed in this way.
\begin{theorem}
  The category $\corel$ is isomorphic as a symmetric monoidal category to
  $\corel(\finset)$.
\end{theorem}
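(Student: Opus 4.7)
The plan is to exhibit an explicit symmetric monoidal functor $F\maps \corel \to \corel(\finset)$ that is the identity on objects and a bijection on hom-sets. Define $F$ on morphisms as follows: given a partition $P$ of $X+Y$, let $N = (X+Y)/P$ be its set of equivalence classes, and let $F(P)$ be the isomorphism class of the cospan $X \to N \leftarrow Y$ whose legs are the restrictions of the quotient map. Since the quotient map is surjective, this cospan is jointly epic, so $F(P)$ is indeed a morphism of $\corel(\finset)$.

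The central technical fact is that $F$ is a bijection on hom-sets. This reduces to the standard correspondence between partitions of a set and surjections out of it, modulo renaming of the target. Concretely, two jointly epic cospans $X \to N \leftarrow Y$ and $X \to N' \leftarrow Y$ are isomorphic in $\cospan(\finset)$ precisely when the induced surjections $X+Y \twoheadrightarrow N$ and $X+Y \twoheadrightarrow N'$ have the same kernel pair, i.e., induce the same partition of $X+Y$. Conversely, every jointly epic cospan factors through the quotient by its induced partition. So $F$ is bijective on hom-sets.

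Next I would verify functoriality. For identities, the identity corelation on $X$ is the partition of $X+X$ into pairs $\{x_1,x_2\}$ where $x_1,x_2$ denote the two copies of $x$; this yields the codiagonal cospan $X \to X \leftarrow X$, which is the identity in $\corel(\finset)$. For composition, given corelations $\alpha\maps X \to Y$ and $\beta\maps Y \to Z$ with associated surjections $X+Y \twoheadrightarrow M$ and $Y+Z \twoheadrightarrow N$, the composite in $\corel(\finset)$ is obtained by forming the pushout $M +_Y N$ and corestricting the legs from $X$ and $Z$ to their joint image. One checks that the partition of $X+Z$ so obtained is exactly the one in which $u \sim v$ iff there is a chain $u = w_0, w_1, \dots, w_k = v$ in $X+Y+Z$ with consecutive terms in the same block of $\alpha$ or $\beta$; this is the definition of $\beta \circ \alpha$. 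This step, the main technical point, follows from the universal property of the pushout applied to the free equivalence relation generated by $\alpha$ and $\beta$ on $X+Y+Z$.

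Finally, the monoidal structure is preserved essentially by construction: on both sides the tensor product on objects is the disjoint union of sets, and on morphisms the disjoint union of partitions corresponds to the disjoint union of the associated cospans. The symmetries, associators, and unitors on $\corel(\finset)$ are inherited from $\finset$ and agree under $F$ with the corresponding structural corelations in $\corel$ (each being the partition induced by the evident bijection of underlying sets). I expect the main obstacle to be a careful bookkeeping verification of the composition step, but no subtle argument is required beyond the universal property of pushouts in $\finset$.
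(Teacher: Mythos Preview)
Your proposal is correct and follows essentially the same approach as the paper: both rest on the correspondence between partitions of $X+Y$ and (isomorphism classes of) jointly epic cospans $X \to N \leftarrow Y$, obtained via the universal property of the coproduct, together with a verification that the two notions of composition agree. The paper's proof is a two-line sketch that invokes the wire-diagram intuition for the composition check, whereas you spell out the pushout-and-corestrict argument explicitly; there is no substantive difference in strategy.
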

\begin{proof}
  By the universal property of the coproduct, corelations $X+Y \to A$ are in
  one-to-one correspondence with jointly epic cospans $X \rightarrow A
  \leftarrow Y$. It is straightforward to check the notions of composition
  agree: consider the wire diagrams for corelations.
\end{proof}

As equivalences preserve colimits, replacing $\finset$ with its strict skeleton
thus shows that $\corel$ also has a strict skeleton. We henceforth use $\corel$
to refer to this equivalent prop. This allows us to restate our main theorem as
follows.

\begin{theorem} \label{thm.main}
  ${\mathbf{Corel}}$ is the prop for extraspecial commutative Frobenius monoids.
\end{theorem}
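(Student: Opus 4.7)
My plan is to exploit Lack's theorem that $\cospan(\finset)$ is the prop for special commutative Frobenius monoids, together with the canonical prop morphism $\pi\maps \cospan(\finset) \to \corel$ sending each cospan to its jointly epic corestriction. First I would construct a prop morphism $F\maps \mathbf{Th(ESCFM)} \to \corel$ by exhibiting an extraspecial commutative Frobenius structure on the object $1$ of $\corel$: send the generators $\mu, \eta, \delta, \epsilon$ to the unique corelations of their respective types. Verification of the Frobenius, special, and extra axioms is a direct inspection of wire diagrams; for the extra law in particular, $\epsilon \circ \eta\maps 0 \to 0$ is the unique corelation of type $(0,0)$, which is the identity. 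The universal property of $\mathbf{Th(ESCFM)}$ then yields $F$.

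To show $F$ is full, given any corelation $\alpha\maps X \to Y$ with equivalence classes $B_1, \ldots, B_k$ on $X+Y$, I would construct a representing term block by block: for each $B_i$, fold $B_i \cap X$ into a single wire using iterated $\mu$, or insert $\eta$ if $B_i \cap X = \emptyset$, and then expand to $B_i \cap Y$ using iterated $\delta$, or insert $\epsilon$ if $B_i \cap Y = \emptyset$; finally, tensor these pieces together and compose with braidings to match the orderings on $X$ and $Y$. Since every block of a corelation meets $X + Y$ by definition, no floating, wholly internal component can arise in this construction.

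The main obstacle is faithfulness. My strategy is to pass through $\cospan(\finset)$: there is an evident prop morphism $G$ from the prop for special commutative Frobenius monoids to $\mathbf{Th(ESCFM)}$, the identity on generators, and composing with Lack's isomorphism gives a prop morphism $\cospan(\finset) \to \mathbf{Th(ESCFM)}$ that fits into a commuting triangle with $F$ and $\pi$. Since $\pi$ is surjective on morphisms, showing $F$ is faithful reduces to showing that the kernel congruence of $\pi$ on $\cospan(\finset)$ is generated, as a prop congruence, by the extra law. The key lemma is that any cospan $X \to N \leftarrow Y$ with non-epic image $N_0 \subsetneq N$ can, modulo the extra law, be reduced to $X \to N_0 \leftarrow Y$: each element of $N \setminus N_0$ can be arranged, using the SCFM axioms that hold in $\cospan(\finset)$, to appear in some decomposition of the cospan as a local composite $\epsilon \circ \eta\maps 0 \to 0$, and the extra law then collapses each such bubble to the identity. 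Once this elimination is established, two cospans with the same jointly epic corestriction become equal in the quotient, so $F$ is faithful and hence an isomorphism of props.
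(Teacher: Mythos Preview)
Your proposal is correct and shares its core content with the paper's proof: both hinge on the observation that the kernel congruence of the corestriction map $\pi\maps \cospan \to \corel$ is generated by the extra law. The paper, however, packages this via coequalizers rather than a direct full/faithful verification. It shows that $\mathbf{Th(ESCFM)}$ is the coequalizer of the two extra-law maps $FE_{\mathrm{Ex}} \rightrightarrows \mathbf{Th(SCFM)}$, and that $\corel$ is the coequalizer of the corresponding maps $FE_{\mathrm{Ex}} \rightrightarrows \cospan$ (the latter being exactly your ``key lemma'' rephrased). Since Lack's isomorphism identifies these parallel pairs, the coequalizers are isomorphic. This sidesteps your explicit construction of $F$ and your block-by-block fullness argument: the isomorphism drops out of the universal property. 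Your route is more concrete and self-contained; the paper's is shorter and foregrounds the extra law as the single relation needed to pass from cospans to corelations. One small simplification to your sketch: eliminating the floating components in $N \setminus N_0$ does not actually require the SCFM axioms---it is just the monoidal decomposition of the cospan as $(n \to N_0 \leftarrow m) + (0 \to N \setminus N_0 \leftarrow 0)$, and then each summand $(0 \to 1 \leftarrow 0)$ is literally $\epsilon \circ \eta$ under Lack's identification.
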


To prove this theorem, we begin by giving a more explicit construction of the
prop for extraspecial commutative Frobenius monoids.

\section{Props from theories} \label{sec.theories}

If we consider the set $\mathbb N \times \mathbb N$ as a discrete category, then
a signature is a functor from $\mathbb N \times \mathbb N$ to the category
$\Set$ of sets and functions. Note that to each prop we may associate the
so-called underlying signature $\hom(\cdot,\cdot)\maps \mathbb N \times \mathbb
N \to \Set$. The following important result allows us to understand the category
$\mathbf{PROP}$ of props; a proof can be found in Rebro \cite{Re} and Trimble
\cite{Tr}.

\begin{proposition}
  The underlying signature functor $U\maps \mathbf{PROP} \to \Set^{\mathbb N
  \times \mathbb N}$ is monadic. 
\end{proposition}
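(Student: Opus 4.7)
The plan is to verify the hypotheses of Beck's monadicity theorem. I would apply the crude version: $U$ is monadic provided (i) it has a left adjoint, (ii) it reflects isomorphisms, and (iii) $\mathbf{PROP}$ has, and $U$ preserves, coequalizers of reflexive pairs.

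For (i), I would construct the left adjoint $F\maps \Set^{\mathbb N \times \mathbb N} \to \mathbf{PROP}$ explicitly. Given a signature $\Sigma$, the free prop $F\Sigma$ has as morphisms $n \to m$ the $\Sigma$-terms of type $(n,m)$ defined in the previous section, quotiented by the smallest congruence that yields a symmetric strict monoidal category---that is, the axioms of associativity, unitality, naturality, the hexagon, and the symmetry equation, imposing no further relations. The unit $\eta_\Sigma\maps \Sigma \to UF\Sigma$ picks out each generator. Its universal property follows by induction on term structure: any signature map $\Sigma \to U{\mathcal T}$ extends uniquely to a prop morphism $F\Sigma \to {\mathcal T}$.

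For (ii), observe that a morphism of props is by definition an identity-on-objects strict symmetric monoidal functor, so it is invertible in $\mathbf{PROP}$ precisely when each of its components $\hom(n,m) \to \hom(n,m)$ is a bijection; hence $U$ reflects (in fact creates) isomorphisms. For (iii), given a reflexive pair $\phi,\psi\maps {\mathcal S} \rightrightarrows {\mathcal T}$, I would form the coequalizer as the quotient of ${\mathcal T}$ by the smallest congruence on its hom-sets---i.e.\ equivalence relation respecting composition and monoidal product---identifying $\phi(f)$ with $\psi(f)$ for each morphism $f$ of ${\mathcal S}$. The quotient inherits a prop structure from ${\mathcal T}$ and carries the universal map out of ${\mathcal T}$ coequalising $\phi$ and $\psi$.

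The main obstacle is verifying that this coequalizer in $\mathbf{PROP}$ has the same underlying signature as the coequalizer computed pointwise in $\Set^{\mathbb N \times \mathbb N}$. This is the familiar phenomenon that forgetful functors from categories of algebraic structures preserve reflexive, but not all, coequalizers: the crux is that for a reflexive pair the pointwise equivalence relation is already closed under the prop operations, because one may use the common section $s$ with $\phi s = \psi s = \mathrm{id}$ to rewrite any putatively new identification generated by applying composition or $+$ as an instance of the basic relation $\phi(f) \sim \psi(f)$. Executing this closure argument rigorously is the technical heart of the proof, and after that the full statement follows from Beck's theorem.
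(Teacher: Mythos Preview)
The paper does not actually give its own proof of this proposition: it states the result and defers entirely to the references Rebro \cite{Re} and Trimble \cite{Tr}. So there is no in-paper argument to compare against.

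Your proposed route via the crude Beck monadicity theorem is the standard one, and is in essence what the cited references carry out. The three ingredients you list are correct, and you have correctly located the only nontrivial point: preservation of reflexive coequalizers. Your heuristic for why the pointwise equivalence relation is already a congruence---using the common section $s$ to absorb identifications generated by composition and $+$---is the right idea, but be aware that executing it cleanly for props requires some care because composition is a \emph{partial} operation (defined only when arities match). One clean way to package this, which is what Trimble does, is to observe that props are the models of a multi-sorted Lawvere theory with sorts $\mathbb N \times \mathbb N$; monadicity of $U$ then follows from the general fact that the forgetful functor from models of any such theory to the product of copies of $\Set$ is monadic. This sidesteps the ad hoc closure argument and gives the result uniformly.
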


We write the right adjoint of this functor $F:\Set^{\mathbb N \times \mathbb N}
\rightarrow \mathbf{PROP}$, and call $F\Sigma$ the \define{free prop} on the
signature $\Sigma$. In fact, the free prop on $\Sigma$ has as morphisms the set
of $\Sigma$-terms \cite{Re}. 

Another important corollary of this theorem is that the category of props
is cocomplete. In particular, this allows us to take coequalizers in the
category of props. We use this to give an explicit construction of the prop for
a symmetric monoidal theory.

Let $(\Sigma,E)$ be a symmetric monoidal theory. Recall that each equation has a
type, and abuse notation to write $E$ also for the resulting signature. Then, as
the morphisms in $F\Sigma$ are $\Sigma$-terms and as $U$ and $F$ are adjoint, we
may define functors $\lambda,\rho\maps FE \to F\Sigma$ mapping each equation to
the first element and the second element of the pair respectively. This allows
us to build the prop for the theory.

\begin{proposition}
  The prop for a symmetric monoidal theory $(\Sigma,E)$ is the coequalizer of
  the diagram
  \[
    \xymatrix{
      FE \ar@<-.5ex>[r]_{\rho} \ar@<.5ex>[r]^{\lambda} & F\Sigma.
    }
  \]
\end{proposition}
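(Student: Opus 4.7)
My plan is to verify directly that the coequalizer $Q$ of $\lambda, \rho : FE \rightrightarrows F\Sigma$, with coequalizing map $q : F\Sigma \to Q$, satisfies the defining property of ``the prop for $(\Sigma,E)$'': for every symmetric monoidal category $\mathcal C$, the category of algebras of $Q$ in $\mathcal C$ is equivalent to the category of models of $(\Sigma,E)$ in $\mathcal C$.

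The first step is to describe algebras of the free prop $F\Sigma$ in $\mathcal C$ concretely. By the adjunction $F \dashv U$, a prop morphism $F\Sigma \to \mathcal P$ into another prop $\mathcal P$ is the same as a signature map $\Sigma \to U\mathcal P$. To transfer this to a general symmetric monoidal $\mathcal C$, I would factor a symmetric strict monoidal functor $\phi : F\Sigma \to \mathcal C$ through the endomorphism prop $\mathrm{End}_{\mathcal C}(X)$ of $X = \phi(1)$, defined by $\mathrm{End}_{\mathcal C}(X)(n,m) = \hom_{\mathcal C}(X^{\otimes n}, X^{\otimes m})$. Thus an algebra of $F\Sigma$ in $\mathcal C$ amounts to an object $X \in \mathcal C$ together with a morphism $\sigma_X : X^{\otimes n} \to X^{\otimes m}$ for each generator $\sigma : n \to m$ of $\Sigma$.

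The second step applies the universal property of the coequalizer. An SSMF $\phi : F\Sigma \to \mathcal C$ factors (uniquely) through $q$ as an SSMF $Q \to \mathcal C$ if and only if $\phi\lambda = \phi\rho$, and this equation says precisely that for every equation $(\tau_1,\tau_2) \in E$ the induced interpretations of $\tau_1$ and $\tau_2$ in $\mathcal C$ agree --- which is exactly the model axiom. Combined with the previous step, this yields a bijection between algebras $Q \to \mathcal C$ and models of $(\Sigma,E)$ in $\mathcal C$. A parallel analysis identifies monoidal natural transformations between algebras with morphisms of models: such a transformation is determined by its single component $f : X \to X'$, and naturality at a general morphism of $Q$ reduces, by strict monoidality and generation of $Q$ under composition and sum by the images of the generators of $\Sigma$, to naturality at each generator, which is exactly the condition $f^{\otimes m} \circ \sigma_X = \sigma_{X'} \circ f^{\otimes n}$.

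The main obstacle is the first step: the adjunction $F \dashv U$ is stated at the level of props, whereas $\mathcal C$ is only required to be a symmetric monoidal category. Routing through the endomorphism prop is what makes the argument go through, but I would want to verify that the identification of SSMFs $F\Sigma \to \mathcal C$ with a choice of object plus generator interpretations is suitably natural, so that the bijection on objects promotes to an equivalence of categories of algebras and of models. Once that bookkeeping is in place, the result is an essentially immediate consequence of the universal property of the coequalizer, with the equations $\phi\lambda = \phi\rho$ in $\mathbf{PROP}$ unfolding directly into the equations required of a model.
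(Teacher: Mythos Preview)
The paper does not actually prove this proposition: it simply states that ``a proof may be found in Rebro \cite{Re} or Trimble \cite{Tr}'' and offers a one-sentence intuition about the coequalizer forcing the two sides of each equation to agree. There is therefore no argument in the paper to compare your proposal against.

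Evaluated on its own terms, your approach is sound and is the standard one. The endomorphism-prop device is exactly the right bridge between the $F \dashv U$ adjunction (stated for props) and algebras valued in an arbitrary symmetric monoidal $\mathcal C$: every strict symmetric monoidal functor $Q \to \mathcal C$ factors through $\mathrm{End}_{\mathcal C}(X)$ for $X$ the image of $1$, so the coequalizer's universal property in $\mathbf{PROP}$ transports to the statement you need. The point you flag as the main obstacle is indeed the only subtle one, and you have identified the correct fix. Two small items worth making explicit when you write it up: first, when $\mathcal C$ is not strict, both the paper's definition of model (via $X^{\otimes n}$) and the construction of $\mathrm{End}_{\mathcal C}(X)$ presuppose a fixed bracketing convention, and you should note that the same convention is used on both sides so that the comparison is honest; second, to promote the bijection on objects to an equivalence of categories you should observe that a monoidal natural transformation between two algebras $Q \to \mathcal C$ with underlying objects $X,X'$ is the same datum as a prop morphism into the ``walking-arrow'' endomorphism prop (or, equivalently, check directly that naturality at generators suffices because $q$ is surjective on morphisms and the generators, together with composition and sum, generate $F\Sigma$). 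With those details filled in, your argument is complete.
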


Again, a proof may be found in Rebro \cite{Re} or Trimble \cite{Tr}. The
intuition is that the coequalizer is the weakest prop that forces the `left-hand
side' (given by $\lambda$) of each equation to equal the `right' (given by
$\rho$).

Write ${\mathbf{Th(ESCFM)}}$ for the prop for extraspecial commutative
Frobenius monoids constructed in this way. It remains to prove that this prop is
isomorphic to ${\corel}$.

\section{Corelations are the prop for extraspecial commutative Frobenius
monoids} \label{sec.mainthm}

In the influential paper \cite{La}, Lack develops the theory of distributive
laws for props, and proves the following as an example.  Note we write
${\cospan}$ for $\cospan({\finset})$.

\begin{proposition} \label{prop:cospanfinset2}
  ${\cospan}$ is isomorphic to the prop $\mathbf{Th(SCFM)}$
  for special commutative Frobenius monoids.
\end{proposition}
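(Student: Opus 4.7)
My approach is to construct an explicit prop morphism $F\maps \mathbf{Th(SCFM)} \to \cospan$ and show it is bijective on hom-sets. I would begin by endowing the object $1$ in $\cospan$ with a special commutative Frobenius monoid structure: take $\mu, \eta, \delta, \epsilon$ to be the obvious cospans $2 \to 1 \leftarrow 1$, $0 \to 1 \leftarrow 1$, $1 \to 1 \leftarrow 2$, and $1 \to 1 \leftarrow 0$ respectively. Checking that these data satisfy associativity, unitality, commutativity, their duals, the Frobenius law, and the special law reduces in each case to computing a small pushout in $\finset$. By the universal property of $\mathbf{Th(SCFM)}$ as a coequalizer in the category of props, this extends uniquely to a prop morphism $F$.

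To show $F$ is surjective on morphisms, I would appeal to the fact that $\finset$ is the prop for commutative monoids. Any cospan $n \to N \leftarrow m$ in $\finset$ factors as the composite in $\cospan$ of the two cospans $n \to N \leftarrow N$ and $N \to N \leftarrow m$, since the relevant pushout is trivial. The first is determined by the function $n \to N$ and so, by the cited characterisation of $\finset$, lies in the image of the monoid part of $F$; the second dually encodes a function $m \to N$ and lies in the image of the comonoid part of $F$. Composing these, every cospan lies in the image.

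The main obstacle is faithfulness. The plan is to prove a normal form theorem: every $\Sigma$-term is equivalent, modulo the SCFM axioms, to a term of the shape $(\text{tree of } \mu\text{'s and } \eta\text{'s}) \circ (\text{tree of } \delta\text{'s and } \epsilon\text{'s})$, mirroring the factorisation above. The Frobenius law is used to commute every occurrence of $\delta$ past every occurrence of $\mu$, and the special law collapses the cycles of parallel edges that this commutation produces; associativity, (co)commutativity, and their duals handle the reordering within each half. Once this normal form is available, two normal forms with the same image under $F$ must have equal monoid and equal comonoid halves, and this reduces to the known freeness of $\finset$ as the prop for commutative monoids (and its opposite as the prop for cocommutative comonoids). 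Carrying out the reduction to normal form cleanly, rather than by a long induction on terms, is the real work; as Lack does, this may be packaged abstractly as a distributive law between the props $\finset$ and $\finset^{\mathrm{op}}$, where the distributive law is precisely pushout in $\finset$ and its defining equations translate to exactly the Frobenius and special laws.
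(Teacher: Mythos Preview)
The paper does not prove this proposition itself: it is quoted as a result of Lack \cite{La}, with only the action of the isomorphism on generators recorded. Your sketch is thus essentially a concrete unpacking of the argument the paper cites, and you say as much in your final sentence. The overall strategy---build $F$ from the universal property, get surjectivity from the factorisation of a cospan as a function followed by an op-function, and get faithfulness from a normal-form/distributive-law argument reducing to the freeness of $\finset$---is correct and is exactly Lack's approach.

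One genuine slip to fix: as written, your normal form has the two halves in the wrong order. You write it as $(\text{$\mu$'s and $\eta$'s}) \circ (\text{$\delta$'s and $\epsilon$'s})$, i.e.\ comonoid part first, monoid part second, but this does \emph{not} mirror the cospan factorisation you correctly described just before (monoid part $n \to N$ followed by comonoid part $N \to m$). Taken literally it also fails: the cap $\epsilon \circ \mu\maps 2 \to 0$ cannot be written with the comonoid half first, since the only morphism $2 \to 0$ built from $\delta,\epsilon$ alone is $\epsilon \otimes \epsilon$, and no $(\mu,\eta)$-term has codomain $0$ except the identity on $0$; but $\epsilon \otimes \epsilon \ne \epsilon \circ \mu$ in a special commutative Frobenius monoid. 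The correct normal form is $(\text{$\delta$'s and $\epsilon$'s}) \circ (\text{$\mu$'s and $\eta$'s})$, matching your cospan factorisation; the Frobenius law is then used to push comultiplications to the right of multiplications, with the special law collapsing any $\mu \circ \delta$ bubbles produced along the way. With that correction the rest of your outline goes through.
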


As the name suggests, a special commutative Frobenius monoid is a commutative
monoid and cocommutative comonid that further obey the Frobenius and special
laws. Note that in ${\finset}$ there are unique maps $0 \to 1$, $1 \to
1$, and $2 \to 1$. The isomorphism acts as follows on the generators
of $\mathbf{Th(SCFM)}$: 
\begin{align*}
  \alpha\maps \mathbf{Th(SCFM)} &\longrightarrow \cospan({\finset}); \\
  \mult{.05\textwidth} &\longmapsto \big(2 \to 1 \leftarrow 1\big) \\
  \unit{.05\textwidth} &\longmapsto \big(0 \to 1 \leftarrow 1\big) \\ 
  \comult{.05\textwidth} &\longmapsto \big(1 \to 1 \leftarrow 2\big) \\
  \counit{.05\textwidth} &\longmapsto \big(1 \to 1 \leftarrow 0\big).
\end{align*}
We use this to prove the main theorem.  The guiding intuition is that to
corestrict cospans to corelations is to impose the `extra' condition upon a
special commutative Frobenius monoid.

Our strategy will be to prove that $\mathbf{Th(ESCFM)}$ and $\corel$ are
coequalizers of isomorphic diagrams, and hence themselves isomorphic. First, we
show how to construct $\mathbf{Th(ESCFM)}$ as a coequalizer of props. 

\begin{lemma} \label{lem.coeqfrobmon} 
  The following is a coequalizer diagram:
\[
  \xymatrixrowsep{5pt}
  \xymatrixcolsep{35pt}
  \xymatrix{
    FE_{\mathrm{Ex}} \ar@<0.6ex>[r]^(.4)\lambda \ar@<-0.6ex>[r]_(.4)\rho
    &  \mathbf{Th(SCFM)} \ar[r] & \mathbf{Th(ESCFM)}.
  }
\]
\end{lemma}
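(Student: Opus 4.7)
The signature $\Sigma$ of the symmetric monoidal theory of extraspecial commutative Frobenius monoids is identical to that of special commutative Frobenius monoids, namely $\{\mu, \eta, \delta, \epsilon\}$, and its set of equations decomposes as $E_{\mathrm{ESCFM}} = E_{\mathrm{SCFM}} \sqcup E_{\mathrm{Ex}}$, where $E_{\mathrm{Ex}}$ is the singleton signature containing just the extra law $\eta \circ \epsilon = \mathrm{id}_0$ (of type $(0,0)$). By the preceding proposition, $\mathbf{Th(ESCFM)}$ is the coequalizer in $\mathbf{PROP}$ of the pair $\lambda, \rho : FE_{\mathrm{ESCFM}} \to F\Sigma$ sending each equation to its two sides, and $\mathbf{Th(SCFM)}$ is the coequalizer of the analogous pair restricted to $FE_{\mathrm{SCFM}}$.

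The strategy is to invoke a general fact about iterated coequalizers in a cocomplete category, applied to $\mathbf{PROP}$: if the indexing object of a coequalizer is a coproduct $A_1 + A_2$, then the coequalizer may be computed by first coequalizing the pair coming from $A_1$ and then coequalizing the image of the pair coming from $A_2$. Since $F$ is a left adjoint, it preserves coproducts, so $FE_{\mathrm{ESCFM}} \cong FE_{\mathrm{SCFM}} + FE_{\mathrm{Ex}}$ in $\mathbf{PROP}$, and the pair $(\lambda, \rho)$ is the copairing of its restrictions to the two summands. Writing $q : F\Sigma \to \mathbf{Th(SCFM)}$ for the coequalizer of the $E_{\mathrm{SCFM}}$-pair, we check by direct manipulation of universal properties that the coequalizer of the full pair $(\lambda, \rho)$ is identified with the coequalizer of the composites $q \lambda, q \rho : FE_{\mathrm{Ex}} \to \mathbf{Th(SCFM)}$: on the one hand, any morphism out of $F\Sigma$ that equalizes $(\lambda,\rho)$ on $E_{\mathrm{ESCFM}}$ necessarily equalizes the $E_{\mathrm{SCFM}}$-pair and hence factors uniquely through $q$, and on the other, a morphism out of $\mathbf{Th(SCFM)}$ that equalizes $q\lambda, q\rho$ on $E_{\mathrm{Ex}}$ pulls back to a morphism out of $F\Sigma$ equalizing the full pair.

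\textbf{Main obstacle.} There is no deep categorical content beyond the standard coequalizer-of-coproduct-decomposition lemma; the work is essentially bookkeeping. The only point requiring care is verifying that the maps labelled $\lambda, \rho$ in the displayed diagram of the lemma really are the two composites $FE_{\mathrm{Ex}} \rightrightarrows F\Sigma \to \mathbf{Th(SCFM)}$, i.e.\ that sending the generator of $E_{\mathrm{Ex}}$ to $\eta \circ \epsilon$ respectively $\mathrm{id}_0$ agrees with the intended $\lambda$ and $\rho$. Once this identification is made, the proof reduces to the coequalizer-composition argument above.
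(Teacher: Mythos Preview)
Your proposal is correct and follows essentially the same approach as the paper: both rely on the decomposition $E_{\mathrm{ESCFM}} = E_{\mathrm{SCFM}} + E_{\mathrm{Ex}}$ and the fact that a coequalizer indexed by a coproduct may be computed in two stages, though you spell out the universal-property verification that the paper simply calls ``straightforward.'' One minor slip: the extra law is $\epsilon \circ \eta = \mathrm{id}_0$, not $\eta \circ \epsilon$ (the latter would have type $(1,1)$).
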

\begin{proof}
  Let $(\Sigma,E_{\mathrm{SCFM}})$ and $(\Sigma,E_{\mathrm{ESCFM}})$ be the
  theories of special commutative Frobenius monoids and extraspecial commutative
  Frobenius monoids respectively---note that they have the same set of
  generators, $\Sigma$. Write also $(\Sigma,E_{\mathrm{Ex}})$ for the theory of
  the `extra law', so $E_{\mathrm{Ex}}$ contains just a single element
  $\bullet\maps 0 \to 0$. This has image $\lambda_{\mathrm{Ex}}(\bullet) =
  \begin{aligned}\extral{.09\textwidth}\end{aligned}$ and
  $\rho_{\mathrm{Ex}}(\bullet) = \varnothing$
  under the two canonical maps $FE_{\mathrm{Ex}} \rightrightarrows F\Sigma$. 
  
  Now, by construction we have a map $F\Sigma \to \mathbf{Th(SCFM)}$, and
  composing this with $\lambda_{\mathrm{Ex}}$ and $\rho_{\mathrm{Ex}}$ gives
  $\lambda$ and $\rho$ respectively. Since an extraspecial commutative Frobenius
  monoid is a fortiori a special commutative Frobenius monoid, by construction
  we also have a map $\mathbf{Th(SCFM)} \to \mathbf{Th(ESCFM)}$; this is the
  unlabelled map above.  As $E_{\mathrm{ESCFM}} = E_{\mathrm{SCFM}}+
  E_{\mathrm{Ex}}$, it is straightforward to verify that the above diagram is a
  coequalizer diagram.
\end{proof}
Next, we construct $\corel$ as a coequalizer.
\begin{lemma} \label{lem.coeqcospan}
  The following is a coequalizer diagram:
\[
  \xymatrixrowsep{5pt}
  \xymatrixcolsep{35pt}
  \xymatrix{
    FE_{\mathrm{Ex}} \ar@<0.6ex>[r]^(.45){\alpha \circ \lambda}
    \ar@<-0.6ex>[r]_(.45){\alpha \circ \rho}
    & {\cospan} \ar[r] & {\corel}
  }
\]
\end{lemma}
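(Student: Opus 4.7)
The plan is to verify the universal property of the coequalizer directly. Write $q \colon \cospan \to \corel$ for the strict symmetric monoidal functor that corestricts each cospan to its jointly epic part (this is the functor whose existence underlies the construction of $\corel(\finset)$ in Section~\ref{sec.props}).

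First I would check that $q$ coequalizes the parallel pair. The single generator $\bullet \maps 0 \to 0$ of $FE_{\mathrm{Ex}}$ is sent by $\alpha\lambda$ to the composite cospan $\alpha(\epsilon)\circ\alpha(\eta) = (0 \to 1 \leftarrow 1)\circ(1 \to 1 \leftarrow 0) = (0 \to 1 \leftarrow 0)$, whose apex $1$ is not hit by $0+0$; it is sent by $\alpha\rho$ to the identity cospan $(0 \to 0 \leftarrow 0)$. Corestricting either to the image of $0+0$ yields the identity corelation on $0$.

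The technical heart of the universal property is the following decomposition: any cospan $(X \to N \leftarrow Y)$ in $\cospan$ is equal, as a morphism, to the monoidal product $(X \to M \leftarrow Y) \otimes (\emptyset \to 1 \leftarrow \emptyset)^{\otimes k}$, where $X \to M \leftarrow Y$ is its jointly epic corestriction and $k = |N \setminus M|$. This is immediate from $N \cong M + (N\setminus M)$ and the fact that monoidal product in $\cospan$ is disjoint union. Now suppose $G \colon \cospan \to \mathcal{P}$ is a prop morphism with $G\alpha\lambda = G\alpha\rho$. Since $G$ is strict symmetric monoidal and the hypothesis forces $G(\emptyset \to 1 \leftarrow \emptyset) = G(\mathrm{id}_0) = \mathrm{id}_0$ in $\mathcal P$, the decomposition gives
\[
  G(X \to N \leftarrow Y) \;=\; G(X \to M \leftarrow Y)\otimes \mathrm{id}_0^{\otimes k} \;=\; G(X \to M \leftarrow Y).
\]
Thus $G$ is constant on the fibres of $q$, and I would define $\bar G \colon \corel \to \mathcal{P}$ by $\bar G([X \to M \leftarrow Y]) := G(X \to M \leftarrow Y)$ on the canonical jointly epic representative of each corelation, so that $\bar G q = G$ by construction.

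Finally I would verify that $\bar G$ is a prop morphism and that it is unique. Identity on objects is inherited from $G$; strict monoidality uses that the disjoint union of jointly epic cospans is jointly epic, so $q$ commutes with the monoidal product on representatives; functoriality uses that composition in $\corel$ is obtained from composition in $\cospan$ by corestriction, combined with the observation above that $G$ is invariant under corestriction. Uniqueness is forced because $q$ is the identity on objects and surjective on morphisms. The main obstacle, and essentially the only step requiring care, is checking that the decomposition interacts correctly with composition in $\cospan$ so that the induced $\bar G$ genuinely respects composition; this reduces to the fact, built into the definition of $\corel$, that pushout followed by corestriction agrees with corestriction followed by pushout-then-corestriction.
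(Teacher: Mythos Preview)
Your proposal is correct and follows essentially the same route as the paper: both arguments hinge on decomposing an arbitrary cospan as its jointly epic corestriction tensored with copies of $(0 \to 1 \leftarrow 0)$, then using the coequalizing hypothesis to kill those extra factors, with uniqueness following from surjectivity of $q$ on morphisms. Your treatment is slightly more explicit about verifying that the induced map is a prop morphism, and there is a harmless labeling slip in your computation of $\alpha(\epsilon)\circ\alpha(\eta)$ (the two factors are interchanged), but the conclusion $(0 \to 1 \leftarrow 0)$ is correct.
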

\begin{proof}
  The map ${\cospan} \to {\corel}$ is the canonical one corestricting each
  cospan to its jointly epic part. It is straightforward to check this is indeed
  a map of props; details can be found in \cite{Fo}. Now $(\alpha \circ
  \lambda)(\bullet) = (0 \to 1 \leftarrow 0)$, while $(\alpha
  \circ\rho)(\bullet) = (0 \to 0 \leftarrow 0)$. This implies the above diagram
  commutes from $FE_{\mathrm{Ex}}$ to ${\corel}$. It remains to check the
  universal property.

  Suppose that we have a prop $\mathcal T$ such that the diagram
  \[
    \xymatrixrowsep{5pt}
    \xymatrixcolsep{35pt}
    \xymatrix{
      FE_{\mathrm{Ex}} \ar@<0.6ex>[r]^(.45){\alpha \circ \lambda}
      \ar@<-0.6ex>[r]_(.45){\alpha \circ \rho}
      & {\cospan} \ar[r]^(.6)A & \mathcal T
    }
  \]
  commutes from $FE_{\mathrm{Ex}}$ to $\mathcal T$. We must show there is a
  unique map $A'\maps {\corel} \to \mathcal T$.

  As the map ${\cospan} \to {\corel}$ is full, it is enough
  to show that each cospan $(n \stackrel{f}{\rightarrow} a
  \stackrel{g}{\leftarrow} m)$ has the same image as its jointly epic part $(n
  \stackrel{f'}{\rightarrow}\im[f,g] \stackrel{g'}{\leftarrow} m)$ under $A$---we then have a
  unique and well-defined map $A'$ sending each corelation to its image as a
  cospan under $A$. But this is straightforward:
  \begin{align*}
    A\big(n \stackrel{f}{\rightarrow} a \stackrel{g}{\leftarrow} m\big)
    &= A\big(n \stackrel{f'}{\rightarrow} \im[f,g] \stackrel{g'}{\leftarrow} m\big) +
    A\big(0 \rightarrow (a-\im[f,g]) \leftarrow 0\big)\\
    &= A\big(n \stackrel{f'}{\rightarrow} \im[f,g] \stackrel{g'}{\leftarrow} m\big) +
    (A\circ\alpha\circ\lambda)\big(\bullet^{+(a-\im[f,g])}\big) \\
    &= A\big(n \stackrel{f'}{\rightarrow} \im[f,g] \stackrel{g'}{\leftarrow} m\big) +
    (A\circ\alpha\circ\rho)\big(\bullet^{+(a-\im[f,g])}\big) \\
    &= A\big(n \stackrel{f'}{\rightarrow} \im[f,g] \stackrel{g'}{\leftarrow} m\big).
  \end{align*}
  This proves the lemma.
\end{proof}

\begin{proof}[of Theorem \ref{thm.main}.]
More explicitly now, our strategy is to show both ${\corel}$ and
$\mathbf{Th(ESCFM)}$ are coequalizers in the diagram
\[
  \xymatrixrowsep{5pt}
  \xymatrixcolsep{35pt}
  \xymatrix{
    &  \mathbf{Th(SCFM)}
    \ar[dd]^{\alpha} \ar[r] & \mathbf{Th(ESCFM)} \ar[dd] \\
    FE_{\mathrm{Ex}} \ar@<0.6ex>[ur]^\lambda \ar@<-0.6ex>[ur]_\rho
    \ar@<0.6ex>[dr]^{\alpha \circ \lambda}
    \ar@<-0.6ex>[dr]_{\alpha\circ\rho}\\
    & {\cospan} \ar[r] & {\corel}
  }
\]
Lemma \ref{lem.coeqfrobmon} shows the upper row is a coequalizer diagram, while
Lemma \ref{lem.coeqcospan} shows the lower row is too. As the two relevant
triangles commute and the first vertical map is an isomorphism,
$\mathbf{Th(ESCFM)}$ and ${\corel}$ are coequalizers of isomorphic
diagrams, and hence themselves isomorphic.
\end{proof}

The so-called spider theorem is an immediate corollory.
\begin{corollary}
  Two morphisms in an extraspecial commutative Frobenius monoid are equal if and
  only if they map to the same corelation.
\end{corollary}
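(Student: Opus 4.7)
The plan is to read the corollary directly off Theorem \ref{thm.main}. First I would unpack the terminology. By a \emph{morphism in an extraspecial commutative Frobenius monoid} I mean a $\Sigma$-term built from the generators $\mu,\eta,\delta,\epsilon$ (together with identities and braidings) using composition and monoidal product; two such terms are called \emph{equal} when they coincide as a formal consequence of the defining axioms. Equivalence classes of $\Sigma$-terms under this relation are exactly the morphisms of $\mathbf{Th(ESCFM)}$. The phrase ``map to the same corelation'' refers to the image of such a morphism under the isomorphism $\mathbf{Th(ESCFM)} \cong \corel$ supplied by Theorem \ref{thm.main}.

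With this dictionary in place, the corollary follows in essentially one step: an isomorphism of props is a bijection on each hom-set, so two $\Sigma$-terms represent the same morphism of $\mathbf{Th(ESCFM)}$ if and only if they are sent to the same corelation. For the ``if'' direction, two terms yielding the same corelation already agree in $\mathbf{Th(ESCFM)}$, and this equality persists under every interpretation $\mathbf{Th(ESCFM)} \to \mathcal C$, hence in every concrete extraspecial commutative Frobenius monoid. For the ``only if'' direction, I would invoke the tautological interpretation given by the identity functor on $\mathbf{Th(ESCFM)}$, which equips the object $1$ with a canonical extraspecial commutative Frobenius monoid structure; equality of two terms in every such monoid forces equality in this universal instance, hence equality of the associated corelations.

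The substantive content has already been carried by Theorem \ref{thm.main} and its supporting Lemmas \ref{lem.coeqfrobmon} and \ref{lem.coeqcospan}, so no further hard step is required. The only mild obstacle is a notational one: namely, being careful to recognise that verifying an equation ``in every extraspecial commutative Frobenius monoid'' reduces to verifying it in the universal one, and to distinguish formal equality of $\Sigma$-terms from coincidental equality in some particular ambient symmetric monoidal category.
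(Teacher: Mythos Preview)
Your proposal is correct and matches the paper's approach: the paper states this as an immediate corollary of Theorem~\ref{thm.main} with no further argument, and your unpacking---that the isomorphism $\mathbf{Th(ESCFM)} \cong \corel$ is a bijection on hom-sets, so equality of terms modulo the axioms coincides with equality of their images in $\corel$---is exactly what ``immediate'' means here. Your additional care in distinguishing the universal instance from arbitrary models is more than the paper provides but is a welcome clarification of the intended reading.
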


\section{Spans, cospans, relations, corelations} \label{sec.summary}

Lastly, we return to the big picture. The dual theorems are known for spans and
relations \cite{La,WW}, but the above method of proof provides a novel argument,
and illuminates the duality. Recall bimonoids, sometimes also called bialgebras.

\begin{definition}
  A \define{bicommutative bimonoid} $(X,\mu,\eta,\delta,\epsilon)$ in a
  monoidal category $(\mathcal C, \otimes)$ comprises a commutative monoid
  $(X,\mu,\eta)$ and a cocommutative comonoid $(X,\delta,\epsilon)$ that further
  obey the extra law and the bimonoid laws
  \[
  \xymatrixrowsep{1pt}
  \xymatrixcolsep{25pt}
  \xymatrix{
    \bi{.12\textwidth} = \frobx{.10\textwidth} & \bimultl{.08\textwidth} =
    \bimultr{.08\textwidth} & \bicomultl{.08\textwidth} = \bicomultr{.08\textwidth} \\
    & \textrm{(the bimonoid laws)} &
  }
  \]
\end{definition}

Bimonoids can be understood as dual to Frobenius monoids:
$\Span({\finset})$ is the prop for bicommutative bimonoids. This fact
goes back to Lack \cite{La}. Wadsley and Woods provide an alternative proof, via
the fact that the category of matrices over a rig\footnote{Also known as a
  semiring, a rig is a ring without the condition that additive inverses exist.
  That is, it is a ri\textbf{n}g without \textbf{n}egatives.} $R$ is the prop for 
bicommutative bimonoids equipped with an action of the rig $R$ \cite[Theorem
5]{WW}. Choosing the rig of booleans, this also implies that ${\rel}$,
the prop equivalent to the category of finite sets and relations, is
the prop for special bicommutative bimonoids. The techniques of this paper can
be co-opted to provide an alternate proof of this fact.

\begin{theorem}
  ${\rel}$ is isomorphic to the prop for special bicommutative
  bimonoids.
\end{theorem}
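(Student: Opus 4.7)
The plan is to mirror the proof of Theorem \ref{thm.main}, replacing cospans with spans and the extra law with the special law. Using Lack's isomorphism $\beta\maps \mathbf{Th(BCB)} \xrightarrow{\sim} \Span$, I would exhibit both the prop $\mathbf{Th(SBCB)}$ for special bicommutative bimonoids and $\rel$ as coequalizers of isomorphic diagrams. Let $(\Sigma, E_{\mathrm{Sp}})$ be the theory of the special law, so that $E_{\mathrm{Sp}}$ has a single generator $\bullet\maps 1 \to 1$ with $\lambda_{\mathrm{Sp}}(\bullet) = \mu \circ \delta$ and $\rho_{\mathrm{Sp}}(\bullet) = \mathrm{id}_1$.

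Analogous to Lemma \ref{lem.coeqfrobmon}, the diagram
\[
\xymatrix{
  FE_{\mathrm{Sp}} \ar@<0.6ex>[r] \ar@<-0.6ex>[r] & \mathbf{Th(BCB)} \ar[r] & \mathbf{Th(SBCB)}
}
\]
is a coequalizer, since $E_{\mathrm{SBCB}} = E_{\mathrm{BCB}} + E_{\mathrm{Sp}}$ by construction. Analogous to Lemma \ref{lem.coeqcospan}, the diagram
\[
\xymatrix{
  FE_{\mathrm{Sp}} \ar@<0.6ex>[r]^{\beta \circ \lambda_{\mathrm{Sp}}} \ar@<-0.6ex>[r]_{\beta \circ \rho_{\mathrm{Sp}}} & \Span \ar[r] & \rel
}
\]
is also a coequalizer, where the rightmost map sends each span to its jointly monic image in $n\times m$. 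Commutativity holds because $\beta(\mu\circ\delta)$ is the span $(1 \leftarrow 2 \to 1)$ while $\beta(\mathrm{id}_1) = (1 \leftarrow 1 \to 1)$, and both have image the full relation on $1\times 1$.

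The heart of the argument is the universal property of the second coequalizer. Given a prop map $B\maps \Span \to \mathcal{T}$ coequalizing the parallel pair, I need to factor it through $\rel$; since $\Span \to \rel$ is full, it suffices to show that $B$ equates any span $\sigma = (n \stackrel{f}{\leftarrow} X \stackrel{g}{\to} m)$ with its jointly monic part $\sigma' = (n \stackrel{f'}{\leftarrow} X' \stackrel{g'}{\to} m)$, where $X' = \im(X \to n \times m)$ and $p\maps X \twoheadrightarrow X'$ is the induced surjection. A direct pullback computation in $\Span$ shows that $\sigma$ equals $\sigma'$ with a ``multiplicity'' span $q := (X' \stackrel{p}{\leftarrow} X \stackrel{p}{\to} X')\maps X' \to X'$ inserted between its two halves. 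Decomposing $X = \coprod_{x' \in X'} p^{-1}(x')$ writes $q$ as the monoidal sum $\sum_{x'} m_{k_{x'}}$ of spans $m_k := (1 \leftarrow k \to 1) = \mu^{(k)} \circ \delta^{(k)}$ with every $k_{x'} \geq 1$. A short induction using the monoid axioms and the special law, via the rewrite $m_{k+1} = \mu \circ (\mathrm{id}_1 \otimes m_k) \circ \delta$, shows that $m_k = \mathrm{id}_1$ for all $k \geq 1$ in $\mathcal{T}$, so $B(q) = \mathrm{id}_{X'}$ and hence $B(\sigma) = B(\sigma')$.

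The main obstacle is recognising this multiplicity-span decomposition. Unlike the cospan case in Lemma \ref{lem.coeqcospan}, where the ``extra'' piece of the apex appears as a direct monoidal summand of a cospan, here the residue after extracting the jointly monic part must be isolated as a composition factor between the two halves of $\sigma'$. Once this is established, pasting the two coequalizer diagrams together via $\beta$ yields coequalizers of isomorphic diagrams, whence the isomorphism $\rel \cong \mathbf{Th(SBCB)}$.
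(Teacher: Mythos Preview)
Your proposal is correct and follows essentially the same strategy as the paper. The paper's proof is only a sketch: it invokes Lack's isomorphism $\Span \cong \mathbf{Th(BCB)}$, sets up isomorphic coequalizer diagrams, and states that ``taking the jointly monic part of a span is equivalent to iteratively asserting that the span $(1 \leftarrow 2 \rightarrow 1)$ may be replaced by the identity $(1 \leftarrow 1 \rightarrow 1)$, and that this manifests as the special law.'' Your multiplicity-span factorisation $\sigma = (\text{right half of }\sigma') \circ q \circ (\text{left half of }\sigma')$ together with the induction $m_{k+1} = \mu \circ (\mathrm{id}_1 \otimes m_k) \circ \delta$ is precisely a concrete realisation of that iterative replacement, so you have simply filled in the details the paper omits.
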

\begin{proof}
  To sketch: Lack has already shown, using the distributive law arising from
  pullbacks in ${\finset}$, that $\Span({\finset})$ is
  isomorphic to the prop for bicommutative bimonoids. We may use this to set up
  isomorphic coequalizer diagrams in the category $\mathrm{PROP}$ to obtain both
  the prop for special bicommutative bimonoids and the prop ${\rel}$.
  The isomorphism arises from the observation that taking the jointly monic part
  of a span is equivalent to iteratively asserting that the span $(1 \leftarrow
  2 \rightarrow 1)$ may be replaced by the identity $(1 \leftarrow 1 \rightarrow
  1)$, and that this manifests as the special law.
\end{proof}

We conclude by displaying our table once again, bringing out the symmetry
by annotating names with the suppressed aspects of their structure.
\[
\begin{tabular}{c|c}
  \textbf{spans} & \textbf{cospans} \\
  extra bicommutative & special bicommutative \\
  bimonoids & Frobenius monoids \\
  \hline
  \textbf{relations} & \textbf{corelations} \\
  extraspecial bicommutative & extraspecial bicommutative  \\
  bimonoids & Frobenius monoids
\end{tabular}
\]

\end{document}